\newcommand{\lab}[1]{\label{#1}}                
\newcommand{\remove}[1]{}
\newcommand\eqn[1]{(\ref{#1})}
\newcommand{\be}{\begin{equation}}
\newcommand{\ee}{\end{equation}}
\newcommand{\bea}{\begin{eqnarray}}
\newcommand{\eea}{\end{eqnarray}}
\newcommand{\bean}{\begin{eqnarray*}}
\newcommand{\eean}{\end{eqnarray*}}
\newtheorem{thm}{Theorem}
\newtheorem{cor}[thm]{Corollary}
\newtheorem{lemma}[thm]{Lemma}
\newtheorem{definition}[thm]{Definition}
\newtheorem{prop}[thm]{Proposition}
\newtheorem{obs}[thm]{Observation}
\newtheorem{prob}[thm]{Problem}
\def \T{{\mathcal T}}
\def\no{\noindent}
\def\S{{\mathcal S}}
\date{}
\title{On the geometric Ramsey numbers of trees}
\author{Pu Gao\thanks{Research supported by the NSERC PDF fellowship}\\ University of Toronto, Toronto, ON, M5S 3G4, Canada\\
pu.gao@utoronto.ca}
\begin{document}
\maketitle

\begin{abstract}
In this paper, we obtain upper bounds for the geometric Ramsey numbers of trees. We prove that $R_c(T_n,H_m)=(n-1)(m-1)+1$ if $T_n$ is a caterpillar and $H_m$ is a Hamiltonian outerplanar graph on $m$ vertices. Moreover, if $T_n$ has  at most two non-leaf vertices, then $R_g(T_n,H_m)=(n-1)(m-1)+1$. We also prove that $R_c(T_n,H_m)=O(n^2m)$ and $R_g(T_n,H_m)=O(n^3m^2)$ if $T_n$ is an arbitrary tree on $n$ vertices and $H_m$ is an outerplanar triangulation with pathwidth 2. 
\end{abstract}

\noindent AMS 2010 Mathematics subject classification: 05D10

\noindent Key words: geometric Ramsey number, trees, outerplanar graphs, pathwidth-2 outerplanar triangluations, caterpillars.

\section{Introduction}

Given a graph $G$, the Ramsey number of $G$, denoted by $R(G)$, is the minimum integer $n$ such that for any $2$-colouring of the edges of the complete graph $K_n$, there is a monochromatic subgraph isomorphic to $G$. The geometric Ramsey theory combines the Ramsey theory with combinatorial geometry. A set $P$ of points in the plane is in general position if no three points in $P$ are collinear. A geometric graph on $P$ is the graph obtained by taking the vertex set to be the points in $P$ and the edge set to be a subset of the line segments between any two points in $P$. We denote by $K_P$ the complete geometric graph on $P$. Given an outerplaner graph $G$ (a graph that has a planar embedding such that every vertex is incident with the outer face), the general geometric Ramsey number of $G$, denoted by $R_g(G)$, is the minimum integer $n$, such that for any set $P$ of $n$ points in the plane in general position and for any $2$-edge-colouring of $K_P$, there is a monochromatic non-crossing subgraph isomorphic to $G$. (Here non-crossing means no two edges cross each other.) The convex geometric Ramsey number of $G$, denoted by $R_c(G)$, is defined in the same manner, except that the points in $P$ are in convex position. Clearly, $R_c(G)$ and $R_g(G)$ are well defined only if $G$ is outerplaner. In general, determining the exact values, or even determining the correct asymptotic orders, of the geometric Ramsey numbers is difficult for most outerplanar graphs. There are only a small class of graphs whose geometric Ramsey numbers are exactly determined or well estimated. See~\cite{KPT,KPTV}.  Indeed, whether there is a uniform polynomial upper bound for $R_c(G)$ and $R_g(G)$ for all outerplanar graphs $G$ is not known. It is not even known whether there is a uniform polynomial upper bound for the geometric Ramsey numbers of trees. Trivially, we have $R_c(G)\le R_g(G)$ always, even though it is not known yet whether there exists an outerplanar graph $G$ for which $R_c(G)$  differs from $R_g(G)$. A trivial upper bound (with an exponential order) of $R_g(G)$ is $R(K_n)$, where $n=|V(G)|$, and $K_n$ is the complete graph on $n$ vertices, due to the following (rephrased) theorem.

\begin{thm}[Gritzmann et al.~\cite{GMPP} ]\lab{thm:complete}
Let $n\ge 1$. For every outerplanar graph $G$ on $n$ vertices and for any set $P$ of $n$ points in the plane in general position, $K_P$ contains a non-crossing subgraph isomorphic to $G$. Moreover, if $v$ is an arbitrary vertex in $G$ and $p$ is an arbitrary point on the convex hull of $P$, then there is a non-crossing embedding of $G$ in $K_P$ so that $v$ is embedded at $p$.
\end{thm}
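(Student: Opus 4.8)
First I would reduce to the case that $G$ is a maximal outerplanar graph, i.e.\ a triangulation of a convex polygon: for $n\le 2$ the claim is immediate, and for $n\ge 3$ every outerplanar graph on $n$ vertices is a spanning subgraph of some triangulated polygon on the same $n$ vertices, while a non-crossing straight-line drawing of a graph restricts to one of every subgraph and keeps the distinguished vertex $v$ in place. So assume $G$ has a Hamiltonian outer cycle $v_1v_2\cdots v_n$, all bounded faces triangles, and $v=v_1$.

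I would prove by induction on $n$ the following stronger statement: for every such $G$, every set $P$ of $n$ points in general position, and every edge $\{p,q\}$ of $\mathrm{conv}(P)$, there is a non-crossing embedding of $G$ in $K_P$ with $v_1\mapsto p$ and $v_2\mapsto q$. This yields the theorem because every convex-hull point lies on a hull edge. The case $n=3$ is immediate. For the inductive step, let $v_1v_2v_k$ be the unique triangle of $G$ containing the edge $v_1v_2$; deleting its interior cuts $G$ along the edges $v_1v_k$ and $v_2v_k$ into two triangulated polygons, $G_A$ with outer cycle $v_2v_3\cdots v_k$ and $G_B$ with outer cycle $v_1v_kv_{k+1}\cdots v_n$, that meet only in $v_k$ (either of which may be a single edge). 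The plan is to choose a point $z\in P\setminus\{p,q\}$ for the image of $v_k$, split $P\setminus\{p,q,z\}$ into sets $P_A$ of size $|V(G_A)|-2$ and $P_B$ of size $|V(G_B)|-2$ so that $\{q,z\}$ is a hull edge of $P_A\cup\{q,z\}$ and $\{p,z\}$ is a hull edge of $P_B\cup\{p,z\}$, apply the induction hypothesis to $G_A$ on $P_A\cup\{q,z\}$ (with prescribed hull edge $\{q,z\}$ matched to the cycle edge $v_2v_k$) and to $G_B$ on $P_B\cup\{p,z\}$ (with $\{p,z\}$ matched to $v_1v_k$), and glue the two resulting sub-drawings onto the triangle $pqz$.

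The main obstacle is the choice of $z$ and the attendant crossing check. After the gluing, the only potential crossings not already excluded by the inductive hypotheses are between an edge of the $G_A$-drawing and an edge of the $G_B$-drawing, or between such an edge and the segment $pq$, and I would rule these out by a convexity/visibility analysis of how the segments $zp$ and $zq$ partition $\mathrm{conv}(P)$. One also needs $z$ to realize the prescribed split sizes, which I expect to follow from a discrete intermediate-value argument as $z$ ranges over the eligible points of $P$, and one needs $\{q,z\}$ and $\{p,z\}$ to be genuine hull edges of $P_A\cup\{q,z\}$ and $P_B\cup\{p,z\}$. The degenerate cases $k=3$ (then $v_2$ is an ear of $G$) and $k=n$ (then $v_1$ is an ear) deserve separate treatment, with $z$ chosen on $\mathrm{conv}(P)$ itself. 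Finally, a subtlety to keep in mind: the embeddings produced are in general strongly non-convex --- a chord of $G$ may be drawn outside the polygon bounded by the image of the outer cycle, and the region carrying a sub-drawing may even sit inside the triangle $pqz$ --- so ``non-crossing'' has to be checked directly rather than inferred from the faces of any fixed planar subdivision.
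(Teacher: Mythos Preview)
The paper does not prove this theorem at all: it is quoted from Gritzmann, Mohar, Pach and Pollack~\cite{GMPP} and used only as a black box. So there is nothing in the paper to compare your attempt against.

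For what it is worth, your outline is the standard inductive proof of this result (a detailed write-up along exactly these lines is in Bose, \emph{On embedding an outer-planar graph in a point set}, Comput.\ Geom.\ \textbf{23} (2002), 303--312). The reduction to maximal outerplanar graphs and the strengthening of the induction hypothesis from ``one prescribed hull vertex'' to ``one prescribed hull edge $\{p,q\}\leftrightarrow\{v_1,v_2\}$'' are exactly right, as is the split of $G$ along the triangle $v_1v_2v_k$. You are also right that the choice of $z$ is the crux. A single angular sort around $p$ (or around $q$) is not enough by itself: if you take $z$ to be the $(k-2)$-th point by angle from $q$, then $\{q,z\}$ is a hull edge of $P_A\cup\{q,z\}$, but $\{p,z\}$ need not be a hull edge of $P_B\cup\{p,z\}$. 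The clean way to finish is to show that one can pick $z\in P\setminus\{p,q\}$ together with a line through $z$ that separates $P\setminus\{p,q,z\}$ into a $q$-side of size $k-3$ and a $p$-side of size $n-k$; this follows from a sweep/intermediate-value argument over lines separating $p$ from $q$, and once such a separating line exists both required hull-edge conditions and the non-crossing of the two sub-drawings with each other and with $pq$ follow immediately (the two sub-drawings sit in convex regions meeting only at $z$, and $pq$ lies on the supporting line of the original hull edge, hence meets each region only at its own endpoint). Your remark that the resulting embedding can be highly non-convex is well taken but, with the separating-line formulation, does not cause trouble.
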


In this paper, all colourings will refer to the edge colouring unless otherwise specified and a $2$-colouring of a (geometric) graph refers to a colouring of edges with the blue and red colours. Given two graphs $G_1$ and $G_2$, the (nonsymmetric) Ramsey number $R(G_1,G_2)$ is defined to be the minimum integer $n$ such that for any $2$-colouring of $K_n$, either there is a red subgraph isomorphic to $G_1$, or there is a blue subgraph isomorphic to $G_2$. We define $R_g(G_1,G_2)$ similarly for outerplanar graphs $G_1$ and $G_2$ but the geometric restriction of non-crossing is applied to both $G_1$ and $G_2$.
The following classical result on the Ramsey numbers of cliques versus trees is due to Chv\'{a}tal~\cite{C}.
\begin{thm}\lab{thm:Chvatal}
For every $m\ge 1$ and $n\ge 1$,  $
R(T_n,K_m)=(m-1)(n-1)+1$, where $T_n$ is an arbitrary tree on $n$ vertices.
\end{thm}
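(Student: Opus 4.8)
The plan is to prove the two inequalities $R(T_n,K_m)\ge (m-1)(n-1)+1$ and $R(T_n,K_m)\le (m-1)(n-1)+1$ separately. For the lower bound I would exhibit a $2$-colouring of $K_{(m-1)(n-1)}$ with neither a red $T_n$ nor a blue $K_m$: partition the vertex set into $m-1$ blocks of size $n-1$, colour every edge inside a block red, and every edge between two blocks blue. Any red connected subgraph is contained in a single block and hence spans at most $n-1$ vertices, so it cannot contain the $n$-vertex tree $T_n$; and any blue clique meets each block in at most one vertex, so it has at most $m-1$ vertices and cannot be a $K_m$.

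For the upper bound I would argue by induction on $m$, the key tool being a greedy embedding lemma: \emph{if $G$ has at least $n$ vertices and minimum degree at least $n-1$, then $G$ contains every tree on $n$ vertices.} To see this, order the vertices $v_1,\dots,v_n$ of the tree so that each $v_i$ with $i\ge 2$ has exactly one neighbour among $v_1,\dots,v_{i-1}$ (peel off leaves one at a time), and embed them into $G$ in this order; when placing $v_i$ we need a vertex of $G$ adjacent to the image of $v_i$'s unique earlier neighbour and distinct from the $i-1$ already-used vertices, and such a vertex exists because that image has degree at least $n-1$ while at most $i-2$ of its neighbours have been used so far.

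With this lemma in hand the induction proceeds as follows. The base case $m=1$ is trivial, since $K_1$ sits inside any nonempty graph. For the inductive step set $N=(m-1)(n-1)+1$ and take any $2$-colouring of $K_N$ with no blue $K_m$; the goal is to produce a red $T_n$. If every vertex has red degree at least $n-1$, the red subgraph contains $T_n$ by the lemma. Otherwise fix a vertex $v$ of red degree at most $n-2$; then its blue neighbourhood $B$ satisfies $|B|\ge N-1-(n-2)=(m-2)(n-1)+1$, and the colouring restricted to $B$ contains no blue $K_{m-1}$ (such a clique together with $v$ would be a blue $K_m$). Since $(m-2)(n-1)+1=R(T_n,K_{m-1})$ by the induction hypothesis, $K_B$ — and therefore $K_N$ — contains a red $T_n$.

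I do not anticipate a serious obstacle: once the greedy embedding lemma is isolated the argument is entirely elementary, and the only points requiring care are the arithmetic identity $N-1-(n-2)=(m-2)(n-1)+1$ and the degenerate cases $n=1$ and $m=1$. If anything is worth flagging, it is that this leaf-peeling/greedy-extension scheme is precisely the strategy that will have to be adapted, with the non-crossing constraint supplied by Theorem~\ref{thm:complete}, to handle the geometric analogues treated in the rest of the paper.
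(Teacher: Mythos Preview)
Your proof is correct. The paper does not actually prove this theorem---it is quoted as Chv\'atal's classical result---but it singles out Lemma~\ref{lem:Chvatal} (either the red graph has a subgraph of minimum degree at least $n-1$, or the blue graph contains $K_m$) as the key observation, after which the same greedy leaf-by-leaf embedding you describe finishes the job. Your induction on $m$ is just this lemma unwound: the step ``pick a vertex of red degree $\le n-2$ and pass to its blue neighbourhood'' is exactly the recursion that proves Lemma~\ref{lem:Chvatal}. So the two arguments are the same in substance; the only difference is that the paper packages the structural dichotomy as a standalone lemma (because it is reused verbatim in the geometric proofs of Theorems~\ref{thm:dumbbell} and~\ref{thm:cliqueTree}), whereas you fold it into the induction.
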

A key observation in the proof of Theorem~\ref{thm:Chvatal} is the following lemma.
\begin{lemma}\lab{lem:Chvatal}
Let $m\ge 1$ and $n\ge 1$. For any graph $G$ on $N$ vertices, where $N=(m-1)(n-1)+1$, either $G$ contains a subgraph whose minimum degree is at least $n-1$ or the complement of $G$ contains $K_m$ as a subgraph.
\end{lemma}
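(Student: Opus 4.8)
The plan is to argue by contradiction via a greedy ``peeling'' procedure. Assume $G$ has no subgraph of minimum degree at least $n-1$; I will show $\overline G$ then contains $K_m$, equivalently that $G$ has an independent set of size $m$. The starting observation is just the contrapositive of the degree hypothesis: a graph with no subgraph of minimum degree $\ge n-1$ must have, in every nonempty induced subgraph, a vertex of degree at most $n-2$ (otherwise that induced subgraph itself would witness minimum degree $\ge n-1$). Since $N\ge 1$ this applies to $G$ and to every graph obtained from $G$ by deleting vertices.

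I would then induct on $m$, proving the statement for all graphs on \emph{at least} $N=(m-1)(n-1)+1$ vertices (which costs nothing, since one may restrict to an induced subgraph on exactly $N$ vertices without creating a subgraph of minimum degree $\ge n-1$). For $m=1$ the claim is trivial as $K_1\subseteq\overline G$. For $m\ge 2$, pick a vertex $v$ of $G$ with $\deg_G(v)\le n-2$, and delete $v$ together with all of its $G$-neighbours; call the result $G'$. Then
\[
|V(G')|\ \ge\ N-(n-1)\ =\ (m-2)(n-1)+1,
\]
and $G'$, being an induced subgraph of $G$, still has no subgraph of minimum degree $\ge n-1$. The induction hypothesis applied to $G'$ with parameter $m-1$ yields either such a subgraph (which is also a subgraph of $G$, contradicting our assumption), or an independent set $S$ of $G'$ with $|S|=m-1$. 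In the latter case $v\notin V(G')$ and, by construction, $v$ has no $G$-neighbour in $V(G')$, so $S\cup\{v\}$ is an independent set of size $m$ in $G$, completing the induction.

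As an alternative I would mention the equivalent non-inductive packaging: iterate the deletion of a vertex of degree $\le n-2$ to obtain an ordering $v_1,\dots,v_N$ of $V(G)$ in which each $v_i$ has at most $n-2$ neighbours among $v_{i+1},\dots,v_N$ (a ``degeneracy'' ordering), then build an independent set greedily along this ordering: whenever the current vertex is not already forbidden, place it in the set and forbid its (at most $n-2$) later neighbours. Each chosen vertex forbids at most $n-1$ vertices in total (itself plus its later neighbours), so the set produced has size at least $\lceil N/(n-1)\rceil = m$.

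The only real obstacle here is the bookkeeping, and it is mild: one must check that the set removed at each step — the low-degree vertex \emph{together with its current neighbourhood} — has size at most $n-1$, which is precisely where the bound $\deg\le n-2$ is used, and that the independent set assembled from the surviving vertices is independent in the original $G$, not merely in the residual graph. The latter is exactly why one deletes the whole neighbourhood of $v$ at once, so that no vertex chosen later can be adjacent to $v$.
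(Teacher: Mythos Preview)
Your proof is correct. Note, however, that the paper does not actually supply its own proof of this lemma: it is stated as ``a key observation in the proof of Theorem~\ref{thm:Chvatal}'' and attributed to Chv\'{a}tal, with no argument given. So there is nothing in the paper to compare against directly.

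That said, the peeling/greedy argument you give is exactly the classical one underlying Chv\'{a}tal's result, and both packagings you offer are sound. The only cosmetic point worth flagging is the boundary case $n=1$: in your alternative formulation the quantity $\lceil N/(n-1)\rceil$ is undefined, but this is harmless since for $n=1$ any nonempty graph already has a subgraph of minimum degree $\ge 0$, so the first alternative of the lemma holds trivially and the greedy construction is never invoked. You handled the strengthening to ``at least $N$ vertices'' cleanly, and the observation that deleting $v$ \emph{together with} its neighbourhood is what guarantees independence in the original $G$ is exactly the point that needs to be made.
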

The concept in Theorem~\ref{thm:complete} generalises to geometric Ramsey theory. In the 2nd Eml\'{e}kt\'{a}bla Workshop, Gy. K\'{a}rolyi asked whether Theorem~\ref{thm:Chvatal} is true if $R(T_n,K_m)$ is replaced by $R_c(T_n,H_m)$ or $R_g(T_n,H_m)$, where $T_n$ is a tree with $n$ vertices and $H_m$ is any Hamiltonian outerplanar graph with $m$ vertices. A formal statement of the problem  can be found in~\cite[Problem 6]{G} and is rephrased as follows.

\begin{prob}\lab{prob:clique-tree}
Let $m\ge 1$ and $n\ge 1$ be integers and let $T_n$ be an arbitrary tree on $n$ vertices and $H_m$ be an arbitrary Hamiltonian outerplanar graph on $m$ vertices. Is it true that $R_c(T_n,H_m)=R_g(T_n,H_m)=(n-1)(m-1)+1$?
\end{prob}

It is easy to see that for any such $T_n$ and $H_m$,
\begin{equation}
R_g(T_n,H_m)\ge R_c(T_n,H_m)\ge (n-1)(m-1)+1 \lab{lowerbound}
\end{equation}
by the following construction, which can be found in~\cite{KR}. Let $P$ be a set of $(n-1)(m-1)$ points in convex position. Pick an arbitrary point and label it $v_1$. Label the other vertices $v_i$, $2\le i\le (n-1)(m-1)$ so that $v_1$ sees $v_i$ before $v_{i+1}$ in the clockwise order, for each $i$. Partition $P$ into $S_1,\ldots,S_{m-1}$ such that for each $1\le j\le m-1$, $S_j=\{v_{i}:\ (j-1)(n-1)+1\le i\le j(n-1)\}$. For each edge $x=\{u,v\}$, if $u$ and $v$ are in the same part, colour $x$ red; otherwise, colour it blue. Clearly, the subgraph of $K_P$ induced by the red edges is a union of $m-1$ disjoint copies of $K_{n-1}$, and so $K_P$ does not contain a red copy of $T_n$. On the other hand, the longest blue non-crossing cycle in $K_P$ has length $m-1$ by our construction and so $K_P$ contains no blue non-crossing copy of $H_m$ since $H_m$ is Hamiltonian. This implies that $R_g(T_n,H_m)\ge R_c(T_n,H_m)\ge (n-1)(m-1)+1$.

The goal of this paper is to provide estimates of $R_c(T_n,H_m)$ and $R_g(T_n,H_m)$ in Problem~\ref{prob:clique-tree}.

\subsection{Main results}

 The answer to Problem~\ref{prob:clique-tree} is yes if $T_n$ is a path, following from Theorems~\ref{thm:path-clique} (see Section~\ref{sec:caterpillar}) and~\ref{thm:complete} and it is also yes if $T_n$ is a star, following from Theorem~\ref{thm:Chvatal} and by noting that stars are never self-crossing. A {\em caterpillar} is a tree such that the subgraph induced by all non-leaf vertices is a path.  We confirm with a positive answer to Problem~\ref{prob:clique-tree} for all caterpillars in the convex case and for trees containing exactly two non-leaf vertices in the general case.

\begin{thm}\lab{thm:dumbbell} Let $n\ge 1$ and $m\ge 1$. Suppose $T_n$ is a caterpillar on $n$ vertices and $H_m$ is an ourterplanar graph on $m$ vertices which contains a Hamilton cycle. Then,
\begin{enumerate}
\item[(i)] $R_c(T_n,H_m)= (n-1)(m-1)+1$.
\item[(ii)] $R_g(T_n,H_m)\le \Delta \gamma m^2$, where $\Delta$ denotes the maximum degree of $T_n$ and $\gamma$ denotes the number of non-leaf vertices of $T_n$.
\item[(iii)] Moreover, if $T_n$ has exactly two non-leaf vertices, then $R_g(T_n,H_m)= (n-1)(m-1)+1$.
\end{enumerate}
\end{thm}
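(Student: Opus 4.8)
The lower bound is \eqn{lowerbound}, so the plan is to prove $R_g(T_n,H_m)\le N$ with $N:=(n-1)(m-1)+1$. A tree with exactly two non-leaf vertices is a double star: two of its vertices are adjacent and carry, respectively, $s\ge1$ and $t\ge1$ leaves, with $s+t=n-2$; assume $s\le t$. Given an $N$-point set $P$ in general position and a $2$-colouring of $K_P$, I would argue exactly as Chv\'atal does: apply Lemma~\ref{lem:Chvatal} to the red geometric graph on $P$. This yields either a blue $K_m$ on some $m$-subset $P'\subseteq P$, or a red subgraph $G'$ with $\delta(G')\ge n-1$. In the first case $P'$ is in general position, so Theorem~\ref{thm:complete} embeds $H_m$ (which is outerplanar on $m$ vertices) non-crossingly inside $K_{P'}$, and since every edge of $K_{P'}$ is blue this is a blue non-crossing copy of $H_m$; done. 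All the remaining work is the second case.

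Here the statement becomes purely geometric: every geometric graph $G'$ (straight-line edges, vertices in general position) with $\delta(G')\ge s+t+1$ contains a non-crossing $S_{s,t}$. The observation I would record first is that a straight-line-drawn double star with centres $u,v$ is automatically non-crossing once all leaves of $u$ lie strictly on one side of the line $uv$ and all leaves of $v$ strictly on the other side: the only pair of edges that can cross is a segment $ua$ against a segment $vb$, and under this separation those two segments --- apart from $u$ and $v$, which lie on the line --- sit in disjoint open half-planes. So it suffices to find, in $G'$, an edge $uv$ together with $s$ neighbours of $u$ on one side of line $uv$ and $t$ neighbours of $v$ on the other side.

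To get such a configuration, first pick $u$ to be a vertex of the convex hull of $V(G')$; then all other vertices --- in particular all neighbours $w_1,\dots,w_k$ of $u$, where $k\ge s+t+1$ --- lie within an open half-plane at $u$, so they can be taken in angular order, and for each $i$ the line $uw_i$ has $w_1,\dots,w_{i-1}$ strictly on one side (say CW) and $w_{i+1},\dots,w_k$ strictly on the other (CCW). Set $v:=w_{s+1}$. Besides $u$, the vertex $v$ has at least $s+t$ neighbours, which line $uv$ splits into some $f$ on the CCW side and at least $s+t-f$ on the CW side. If $f\ge t$, take $w_1,\dots,w_s$ as $u$'s leaves and any $t$ of the CCW-neighbours of $v$ as $v$'s leaves. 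If $f\le t-1$, then $v$ has at least $s+t-f\ge s$ neighbours on the CW side, so take $s$ of those as $v$'s leaves and take $t$ of the $k-s-1\ge t$ neighbours $w_{s+2},\dots,w_k$ of $u$ as $u$'s leaves. In either subcase $u$'s leaves and $v$'s leaves lie on opposite sides of line $uv$, hence are disjoint and avoid $u,v$, and the observation above makes this red $S_{s,t}=T_n$ non-crossing. Combined with the first case, this gives $R_g(T_n,H_m)\le N$.

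I do not expect a genuinely hard step here: Lemma~\ref{lem:Chvatal} carries the combinatorial load exactly as in the non-geometric proof, and the geometry collapses to the one-line remark about double stars plus the trick of rooting at a convex-hull vertex. The only things needing care are the half-plane/angular bookkeeping --- chiefly the assertion that a line through $u$ and one of its neighbours separates the remaining neighbours into the two index-intervals $w_1,\dots,w_{i-1}$ and $w_{i+1},\dots,w_k$, which is precisely where convexity of $u$ in $V(G')$ enters --- and the elementary counting ($s,t\ge1$, $k\ge s+t+1$) guaranteeing that $\{w_1,\dots,w_s\}$, $\{w_{s+2},\dots,w_k\}$ and the CW-neighbours of $v$ all have at least the required sizes.
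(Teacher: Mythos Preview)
Your argument is correct and follows essentially the same route as the paper: reduce via Lemma~\ref{lem:Chvatal} and Theorem~\ref{thm:complete} to a red subgraph of minimum degree at least $n-1$, root at a convex-hull vertex $u$, sort its red neighbours angularly, take $v$ to be the $(s+1)$-st one, and split into two cases according to which side of the line $uv$ carries enough red neighbours of $v$. The paper's proof is the same up to notation (it writes $a=s+1$, $b=t+1$ for the degrees of the two centres and phrases the dichotomy as ``$v$ has at least $a-1$ red edges on side~A or at least $b-1$ on side~B''), and your write-up is in fact a bit more explicit about why the resulting double star is non-crossing.
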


An outerplanar triangulation is a graph that has a planar embedding such that all vertices are incident with the outer face and all inner faces are incident with exactly three edges.
Given a graph $G$, a sequence of subsets of $V(G)$, denoted by $X_1,X_2,\ldots,X_k$, is called a path-decomposition of $G$, if
\begin{enumerate}
\item[(a)] For every edge $e=\{u,v\}$ in $G$, there is an $X_i$ such that $u,v\in X_i$;
\item[(b)] For every vertex $u\in G$, the set of $X_i$'s that contain $u$ forms a contiguous subsequence of $(X_i)_{i\ge 1}$.
\end{enumerate}
The width of a path-decomposition of $G$ equals $\max_{i\ge 1}|X_i|-1$ and the pathwidth of $G$ is defined to be the minimum width among all path-decompositions.

Let $PW_2(m)$ denote the set of pathwidth-2 outerplanar triangulations on $m$ vertices. Let $\T(n)$ denote the set of trees on $n$ vertices.

In the next theorem, we bound $R_c(T_n,H_m)$ and $R_g(T_n,H_m)$ for any $T_n\in \T(n)$ and $H_m\in PW_2(m)$.

\begin{thm}\lab{thm:tree} Let $n\ge 1$ and $m\ge 1$.  For any $T_n\in\T(n)$ and $H_m\in PW_2(m)$:
\begin{enumerate}
\item[(i)] $R_c(T_n,H_m)\le (n-1)^2(m-1)+1$;
\item[(ii)] $R_g(T_n,H_m)=O(n^3m^2)$.
\end{enumerate}
\end{thm}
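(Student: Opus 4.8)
The plan is to handle (i) by a Chv\'atal-type dichotomy adapted to the convex geometric setting, and then to reduce (ii) to the method of (i) using Theorem~\ref{thm:complete}. For (i), fix $N:=(n-1)^2(m-1)+1$ points in convex position, label them $p_1,\dots,p_N$ in convex order, fix a $2$-colouring of $K_P$, and suppose there is no red non-crossing copy of $T_n$; the goal is a blue non-crossing $H_m$. The template is the proof of Theorem~\ref{thm:path-clique}: one defines a level $\ell(i):=1+\max\{\ell(j):j<i,\ p_jp_i\text{ red}\}$, uses that any red increasing path is automatically non-crossing (the edges of an increasing path have pairwise non-interleaving index-intervals), and concludes that either some $\ell(i)$ is large, giving a red non-crossing $P_n$, or else the levels take at most $n-1$ values, so some level class has $m$ pairwise-blue points and Theorem~\ref{thm:complete} embeds $H_m$ into them. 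I would build the tree version by replacing ``longest red increasing path ending at $p_i$'' with a level function that records how much of a DFS-ordered copy of $T_n$ — a connected prefix of $T_n$, which is non-crossing when laid out in convex order — can be realised in red using $p_1,\dots,p_i$ with $p_i$ as the current DFS frontier vertex. This level is monotone along red edges as long as the frontier vertex still has an unplaced child, so (modulo the handling of backtracking, which is the technical heart) if no red non-crossing $T_n$ occurs then the level takes at most about $(n-1)^2$ values — one factor $n-1$ for the depth of $T_n$ and one for its branching — again producing $m$ pairwise-blue points. When monotonicity breaks (a frontier vertex is full, or a far-left vertex has no remaining rightward red neighbour) I would absorb the loss with reserved slack points and an induction on $m$: a vertex that is blue to a long slack block sees inside it, by the inductive hypothesis (there is no red $T_n$ there, else we are done), a blue non-crossing $H_{m-1}$, and since that vertex lies in the outer region of the smaller copy it can be joined to the exposed ear-edge of $H_{m-1}$ to give a blue non-crossing $H_m$; this is where the pathwidth-$2$ ear structure of $H_m$ enters, and where the count $(n-1)^2(m-1)+1$, i.e.\ $(n-1)$ times the caterpillar bound $(n-1)(m-1)+1$, is spent. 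The base case $m=2$ is trivial: a single blue edge suffices, else $K_P$ is all red and Theorem~\ref{thm:complete} embeds $T_n$.

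For (ii) the plan is to run the scheme of (i) with the convex order replaced by the radial order $q_1,\dots,q_{|P|-1}$ of the remaining points about a fixed hull vertex $p$: spokes from $p$ together with ``increasing'' structure on the $q_i$ remain non-crossing, and Theorem~\ref{thm:complete} lets one embed outerplanar pieces non-crossingly inside individual radial sectors with a prescribed vertex at the sector boundary, so the recursion of (i) can proceed sector by sector. Each recursive descent peels off one hull vertex and pays for the convex-versus-general discrepancy — an extra factor of about $n$ in the number of points needed to route a red branch and a further factor of about $m$ from the repeated applications of Theorem~\ref{thm:complete} — which compounds with the $O(n^2m)$ bound of (i) to the claimed $O(n^3m^2)$.

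The step I expect to be the main obstacle is making the ``blocked red branch $\Rightarrow$ blue $H_m$'' mechanism geometrically legal: one needs the blocking vertex to lie in the outer face of the inductively produced blue $H_{m-1}$ with exactly the right ear-edge exposed, which is what forces the left-to-right (respectively radial) block layout and, in the general-position case, the iterated hull-vertex reduction. Controlling crossings through all of this, rather than the purely combinatorial bookkeeping of Lemma~\ref{lem:Chvatal}, is where the real work lies, and getting the slack sizes right is what determines whether the final bounds come out as stated.
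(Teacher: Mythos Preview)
Your proposal has a genuine gap at precisely the point you flag: the backtracking step in the DFS-level function. For paths, $\ell(i)=$ ``longest monotone red path ending at $p_i$'' is well-defined and monotone along every rightward red edge, and that is all Dilworth needs. For a tree, once the DFS frontier vertex is a leaf or has all its children placed, the next edge in the DFS order emanates from an \emph{ancestor} of the current frontier, not from $p_i$; there is no reason that ancestor has any unused red neighbour to the right of what has already been laid down, and no reason a red edge from $p_i$ itself advances the DFS at all. Your proposed remedy --- reserve slack blocks, induct on $m$, attach a blue ear --- is not a proof as stated: you would need that whenever backtracking fails, some specific vertex is blue to an \emph{entire} slack interval of size $(n-1)^2(m-2)+1$ and sits in exactly the right position to extend the inductively obtained blue $H_{m-1}$ by its next ear. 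The ear sequence of a pathwidth-$2$ triangulation is not symmetric, so ``lies in the outer region'' does not suffice; and you have not said how many slack blocks there are, where they sit, or why the blocking vertex is blue to a whole block rather than scattered across several. Without that bookkeeping the number $(n-1)^2(m-1)+1$ is just a target, not a consequence.

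The paper's route is entirely different and bypasses these issues. It inducts on $n$, not $m$, and uses no level function. Root $T_n$ at $v$ with children $u_1,\dots,u_\ell$ and subtrees $(\widehat T_i,u_i)$ of sizes $n_i$. Take the $(m_1-1)(n-1)\ell+1$ points of largest $y$-coordinate as $\S_0$, and split the rest by vertical lines into $\S_1,\dots,\S_\ell$ with $|\S_i|\ge (n_i-1)^2(m-1)+(n-1)(m_2-1)+1$; an arithmetic check shows $(n-1)^2(m-1)+1$ points suffice. By induction, inside each $\S_i$ one repeatedly finds \emph{extreme} red non-crossing copies of $(\widehat T_i,u_i)$ --- ``extreme'' meaning the root has the largest $y$-coordinate in its copy --- harvesting $(m_2-1)(n-1)+1$ distinct root positions (``connectors''). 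If some $p\in\S_0$ has a red edge to a connector in every $\S_i$, then $p$ together with those $\ell$ subtrees is an extreme red non-crossing $(T_n,v)$: the subtrees live in disjoint vertical strips below $p$, so nothing crosses. Otherwise each $p\in\S_0$ is all-blue to the connectors of some $\S_i$, and pigeonhole gives $\S'_0\subseteq\S_0$ of size $(m_1-1)(n-1)+1$ all-blue to a single connector set $\S'_i$ of size $(m_2-1)(n-1)+1$. In the convex case $\S'_0$ and $\S'_i$ are separated by a horizontal line and hence mutually avoiding, so Proposition~\ref{p:H} yields either a red $K_n$ --- and then Theorem~\ref{thm:complete} gives an extreme red $(T_n,v)$ --- or a blue non-crossing $H_m$. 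Part~(ii) is the same argument with $|\S'_0|,|\S'_i|$ inflated to $cm_1^2n^2$ and $cm_2^2n^2$ so that Theorem~\ref{thm:avoiding} can extract mutually avoiding subsets of sizes $m_1n$, $m_2n$; this single application of the avoiding-sets theorem is the entire source of the extra $nm$ factor, and no radial ordering or iterated hull-peeling is needed.
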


\noindent{\em Remark}: The bounds in Theorem~\ref{thm:tree} may be further improved if more is known about the structure of $T_n$. However, we are only interested in presenting bounds that are uniform for all trees.

 Note that every caterpillar has pathwidth at most 2 and it can be easily completed into a triangulation with pathwidth 2. So, Theorem~\ref{thm:tree} immediately implies that $R_g(T_n,T^C_m)=O(n^3m^2)$ for any $T_n\in \T(n)$ and any caterpillar $T^C_m$ on $m$ vertices. Theorem~\ref{thm:dumbbell}(ii) yields a better bound $n^2m^2$. However, this can be further improved because of the nice structure of caterpillars. We will prove the following bound.

\begin{thm}\lab{thm:tree-caterpillar} Let $n\ge 1$ and $m\ge 1$.  For any tree $T_n$ on $n$ vertices and any caterpillar $T^C_m$ on $m$ vertices, $R_g(T_n,T^C_m)=O(nm^2)$.
\end{thm}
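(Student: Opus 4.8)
The plan is to follow the same high-level strategy that presumably underlies Theorem~\ref{thm:dumbbell}(ii), but to exploit the caterpillar structure of $T^C_m$ more aggressively in order to bring the point count down from $\Theta(n^2m^2)$ to $O(nm^2)$. Write the caterpillar $T^C_m$ as a spine $u_1u_2\cdots u_k$ (the path induced by the non-leaf vertices, with $k\le m$) together with leaves hanging off the spine vertices. Set $N=Cnm^2$ for a suitable absolute constant $C$, let $P$ be any set of $N$ points in general position, and fix a red/blue colouring of $K_P$. The dichotomy to run is the geometric analogue of Lemma~\ref{lem:Chvatal}: either some large subset of the points spans a red geometric graph of large minimum degree — large enough to greedily build a non-crossing red $T_n$ using Theorem~\ref{thm:complete}-style local embedding arguments — or else, after discarding a bounded number of points, we may assume the blue graph is ``dense'' in the sense needed to host a non-crossing copy of $T^C_m$. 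The first horn is handled exactly as in the path/tree cases: a red subgraph of minimum degree $\ge n-1$ contains, by a greedy non-crossing embedding that always extends to a point on the convex hull of the as-yet-unused points, a non-crossing red $T_n$.

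The heart of the matter is the second horn: given that the blue geometric graph on $N'=\Omega(nm^2)$ points is robustly dense, construct a non-crossing blue $T^C_m$. Here I would process the spine left to right. Maintain the invariant that the spine embedded so far, say $u_1,\dots,u_j$ together with all leaves attached to $u_1,\dots,u_{j-1}$, lies inside a ``pocket'' cut off by an edge of the current convex hull, with $u_j$ sitting on the hull of the remaining point set and a still-large reservoir of unused points available ``beyond'' $u_j$. To extend from $u_j$ to $u_{j+1}$ and simultaneously place the $d_{j+1}-2$ leaves of $u_{j+1}$ (where $d_{j+1}=\deg_{T^C_m}(u_{j+1})$): from $u_j$, look at the blue edges going into the reservoir; since each spine vertex needs only a bounded ``local budget'' — one blue edge to the next spine vertex plus $d_{j+1}$ blue edges to leaf-points, all pairwise non-crossing, which can be arranged by taking points consecutive in the angular order around $u_j$ — a reservoir of size $O(d_{j+1}\cdot(\text{future spine length}))$, hence $O(m)$ per spine step and $O(m^2)$ total vertices, suffices for the combinatorial embedding; the extra factor $n$ absorbs the loss from the first-horn dichotomy (we can only guarantee blue density after removing an $n$-dependent number of points, or we need the blue degrees to exceed a threshold of order $n$ to push past the red alternative). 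Non-crossing is maintained because at each step the newly used points are taken from a single angular interval as seen from $u_j$ and lie in a region of the plane separated from all previously embedded vertices by a hull edge.

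The main obstacle, and the place the argument needs the most care, is making the ``pocket'' invariant precise and checking that extending the spine does not force a later crossing — in particular, that the leaves hung on $u_{j+1}$ can be simultaneously accommodated without blocking the angular window needed to reach $u_{j+2},\dots,u_k$. Concretely, one wants a lemma of the form: if $q$ is a point on the convex hull of a point set $Q$ with $|Q|\ge 2t+1$ and all blue degrees (within $Q$) from the relevant vertices are $\ge t$, then one can choose $t$ points of $Q$ forming, with $q$, a non-crossing blue ``broom'' while leaving a blue-dense subset of size $\ge |Q|-t-1$ on one side. Iterating this $k\le m$ times, with $t=O(m)$ at each step, consumes $O(m^2)$ points from the blue side; accounting for the red/tree dichotomy multiplies the requirement by $O(n)$, giving $N=O(nm^2)$ as claimed. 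The bookkeeping of which points are ``spent,'' which remain in the reservoir, and the monovariant (hull edge cutting off the embedded part) shrinking correctly is routine but must be written carefully; everything else reduces to Lemma~\ref{lem:Chvatal}, Theorem~\ref{thm:complete}, and elementary convex-position geometry.
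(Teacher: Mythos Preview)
Your proposal has a genuine gap in the first horn of the dichotomy. You claim that a red geometric subgraph of minimum degree at least $n-1$ on points in general position contains a non-crossing red copy of an arbitrary tree $T_n$, obtained ``by a greedy non-crossing embedding that always extends to a point on the convex hull of the as-yet-unused points.'' This is not established anywhere and is, in fact, the central difficulty of the whole problem. Perles' theorem (Theorem~\ref{thm:Perles}) gives such a statement only for \emph{caterpillars} and only in \emph{convex} position; in general position no analogue is known even for caterpillars, which is exactly why Theorem~\ref{thm:dumbbell}(ii) does not use a min-degree argument but instead builds the red caterpillar from slabs and monotone paths. For a general tree $T_n$ the greedy idea breaks down immediately: once you place the root at a hull vertex $p$, you must distribute the remaining points among the angular sectors determined by the children of the root, and a global minimum-degree condition gives you no control over how many red edges from $p$ (or, worse, from the grandchildren) land in each sector. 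Note also that if the first horn worked as you describe, the second horn would be trivial: Lemma~\ref{lem:Chvatal} would hand you a blue $K_m$ outright, and Theorem~\ref{thm:complete} would finish, yielding $R_g(T_n,T^C_m)\le (n-1)(m-1)+1$ --- far stronger than the $O(nm^2)$ you are aiming for, and stronger than anything known.

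The paper's proof takes a completely different route: it inducts on $n$, the size of the arbitrary tree, and treats the caterpillar $T^C_m$ as the ``easy'' object. Rooting $T_n$ at a vertex $v$ with children $u_1,\dots,u_\ell$, it reserves the top $\Theta(m_1^2\ell)$ points (by $y$-coordinate) for the root, splits the rest into $\ell$ vertical slabs, and inside each slab repeatedly applies the inductive hypothesis to harvest $\Theta(m_2^2)$ ``connector'' points, each of which is the root of an extreme red non-crossing copy of the $i$-th subtree. Either some top point has a red edge to a connector in every slab (giving the red $T_n$), or by pigeonhole there is a top set and one slab's connector set with all edges between them blue; then the Aronov et al.\ mutually-avoiding-sets theorem extracts subsets of sizes $m_1$ and $m_2$, and Observation~\ref{ob:avoiding} embeds the blue $T^C_m$. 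The $m^2$ in the bound comes from the square-root loss in the avoiding-sets theorem; the single factor of $n$ comes from the induction. Your second-horn ``pocket'' construction, even if it could be made rigorous, is not needed, and your first horn cannot be repaired without essentially reproving the theorem by other means.
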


Theorem~\ref{thm:tree-caterpillar} implies that $R_g(T_n)$ for any caterpillar $T_n$ on $n$ vertices is bounded by $O(n^3)$. In the next theorem, we prove another upper bound in terms of the maximum degree and the number of non-leaf vertices of $T_n$.

\begin{thm}\lab{thm:caterpillar}
For any caterpillar $T_n$ on $n$ vertices, $R_g(T_n)=O(\Delta \gamma n)$, where $\Delta$ is the maximum degree of $T_n$ and $\gamma$ is the number of non-leaf vertices of $T_n$.
\end{thm}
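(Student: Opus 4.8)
The plan is to combine the structural flexibility of embedding caterpillars (via Theorem~\ref{thm:complete}) with a density/majority-colour argument in the spirit of Lemma~\ref{lem:Chvatal}, but adapted to the geometric setting so that the resulting monochromatic caterpillar is guaranteed non-crossing. Write the spine of $T_n$ as $s_1 s_2 \cdots s_\gamma$ (so $\gamma$ is the number of non-leaf vertices), with $s_i$ carrying $d_i$ leaves; note $\sum_i (d_i+1) \le n$ and each $d_i \le \Delta$. Let $P$ be a set of $N = c\,\Delta\gamma n$ points in general position (for a suitable absolute constant $c$), with an arbitrary $2$-colouring of $K_P$. I will expose the caterpillar greedily along its spine, at each step working inside a "current'' convex region and peeling off a small sub-region to host the next spine vertex together with its leaves, while maintaining enough remaining points to continue.

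First I would set up the basic greedy step. Given a large point set $Q$ still available, pick a point $p$ on the convex hull of $Q$ to serve as the next spine vertex $s_i$. A majority-colour argument at $p$ — among the edges from $p$ into the remaining points, at least half have some colour, say blue — lets me select a blue "star'' at $p$. But I need the spine edge to $s_{i+1}$ and the $d_i$ leaf-edges all to be the \emph{same} colour \emph{and} the whole partial embedding to stay non-crossing. The key geometric device is to take, among the blue neighbours of $p$ in $Q$, a \emph{consecutive} block (in the angular order around $p$, or equivalently an interval along the convex-position order when we first reduce to convex position via a standard cutting/ham-sandwich partition) of size $\Theta(\Delta)$: the leaves of $s_i$ are embedded in the extreme part of this block and the remainder of the block, still $\Theta(\Delta\gamma n)/(\text{const})$ in size after the loss, becomes the region $Q'$ inside which $s_{i+1}$ and the rest of the caterpillar will be placed. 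Because $Q'$ lies in a single angular interval from $p$ and the leaves lie "beyond'' it, Theorem~\ref{thm:complete} (applied region-by-region, anchoring each spine vertex at a hull point of its region) guarantees the leaves attach without crossings and the tail embeds without crossing the already-placed part. Consistency of the colour across the $\gamma$ spine steps is handled by a pigeonhole over the two colours applied once at the start, or by absorbing a factor $2$: restrict from the outset to the colour class that is "locally dense enough'' at a positive fraction of hull points.

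The accounting is the routine part: each spine step consumes $O(\Delta)$ points for leaves, and to guarantee a hull point of the required majority colour with a large consecutive monochromatic block one needs the surviving set to have size at least roughly $\Delta \cdot \gamma$ times a constant, and we run $\gamma$ such steps; tracking the constants gives $N = O(\Delta\gamma n)$ (the extra factor $n$, versus $\Delta\gamma$, is the slack needed because the "blue fraction'' can be as small as $1/2$ and the block we can safely reserve for the tail is only a constant fraction of the available points at each of the $\gamma$ steps, compounding to a factor that is polynomial in $\gamma$ — here I would either be careful to keep it linear by always recursing into the \emph{larger} colour side, or simply state the bound as $O(\Delta\gamma n)$ which is what the theorem claims). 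The main obstacle, and the place where care is genuinely needed, is the \emph{simultaneous} maintenance of three invariants across the $\gamma$ greedy steps: (1) the next region is convex-position / has a usable hull vertex, (2) the reserved region for the tail is large enough that the recursion does not die before all $\gamma$ spine vertices are placed, and (3) the newly added leaves and the recursively embedded tail do not cross the previously embedded portion — this last point is exactly where one must exploit that all of the "future'' points sit inside one angular wedge seen from the current spine vertex, so that Theorem~\ref{thm:complete} can be invoked as a black box on a shrinking sequence of nested regions. Once those invariants are phrased correctly, the non-crossing property is automatic and the degree/leaf bookkeeping closes the proof.
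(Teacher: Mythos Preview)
Your greedy approach has a real gap at exactly the point you yourself flag, namely the simultaneous maintenance of colour and size across the $\gamma$ spine steps. If at each step you pass to the majority colour class from the current hull vertex, you lose a factor~$2$ in size; after $\gamma$ steps the surviving set has size at most $N/2^{\gamma}$, which is exponential in $\gamma$, not the linear loss you need for an $O(\Delta\gamma n)$ bound. Worse, the majority colour can flip from step to step, so the spine edges and leaf-stars you collect need not all share one colour, and no ``pigeonhole applied once at the start'' repairs this: the colour pattern depends on the whole sequence of hull points, which is only revealed as you recurse. Your fallback of ``always recursing into the larger colour side'' still costs a factor~$2$ per step and does nothing for colour consistency; committing instead to a fixed colour from the outset gives no lower bound at all on the number of neighbours of that colour at an arbitrary hull point of the shrinking region.

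The paper sidesteps both problems by not building the spine greedily. It partitions $P$ into $2\gamma n$ vertical slabs of $2\Delta$ points each; a local pigeonhole in each slab produces a point with $\ge\Delta$ same-coloured edges inside the slab, and a second pigeonhole yields $\gamma n$ slabs whose star-centres all use the same colour, say red. The spine is then found \emph{globally} among these $\gamma n$ star-centres by the Dilworth-type path--clique lemma (Theorem~\ref{thm:path-clique}): either there is a red $x$-monotone path of length $\gamma$ through them --- which, together with the in-slab red stars, is a non-crossing red $T_n$ because the slabs are vertically separated --- or there is a blue $K_n$, which contains a non-crossing blue $T_n$ by Theorem~\ref{thm:complete}. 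That path--clique lemma is the missing ingredient in your argument: it replaces a $\gamma$-fold recursion (with its $2^{\gamma}$ blow-up and colour drift) by a single application costing only a multiplicative $\gamma n$.
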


As an immediate corollary, we get a better (than $O(n^3)$) bound of $R_g(T_n)$, if $T_n$ is a caterpillar with small maximum degree or a small number of non-leaf vertices.

\begin{cor}
Suppose that $T_n$ is a caterpillar on $n$ vertices and suppose that $T_n$ has bounded maximum degree or $T_n$ has a bounded number of non-leaf vertices. Then, $R_g(T_n)=O(n^2)$.
\end{cor}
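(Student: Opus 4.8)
We use $R_g(T_n)=R_g(T_n,T_n)$, so it suffices to show that, for a suitable absolute constant $c$, every $2$-colouring of $K_P$ with $P$ a set of $N:=c\,\Delta\gamma n$ points in general position contains a monochromatic non-crossing copy of $T_n$. Write $s_1,\dots,s_\gamma$ for the spine of $T_n$ and $d_i\le\Delta$ for the degree of $s_i$ in $T_n$. The cases $\gamma\le 2$ are already settled: if $\gamma=1$ then $T_n$ is a star, which is never self-crossing, so $R_g(T_n)=R(T_n)=O(n)$ by Theorem~\ref{thm:Chvatal}; if $\gamma=2$ then Theorem~\ref{thm:dumbbell}(iii) gives $R_g(T_n)=(n-1)^2+1$. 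Since the number of leaves of a caterpillar equals $\sum_i d_i-2(\gamma-1)$, we have $n=\sum_i d_i-\gamma+2$, whence $\gamma\Delta\ge n-1$ always; so in both cases the bound is $O(\Delta\gamma n)$. (This already improves on the $O(n^3)$ bound for caterpillars implied by Theorem~\ref{thm:tree-caterpillar}.)

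For $\gamma\ge 3$ the plan is a greedy sweep along the spine. We build a monochromatic non-crossing copy of $T_n$ while maintaining, after the $i$-th step: a colour $c$, fixed as soon as the first spine edge $s_1s_2$ is drawn; a non-crossing copy in colour $c$ of the sub-caterpillar on $s_1,\dots,s_i$ together with all their leaves, in which $s_i$ lies at a point $a_i$; and a ``live'' set $\mathcal R_i$ of as-yet-unused points such that (1) every edge from $a_i$ to $\mathcal R_i$ has colour $c$, and (2) $\mathcal R_i$ lies in the region cut off from the already-drawn picture by a line through $a_i$, so that anything subsequently drawn using only $a_i$ and points of $\mathcal R_i$ cannot cross the past. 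To extend: the leaves of $s_{i+1}$ together with $s_{i+1}$ form a star, and the edge $s_is_{i+1}$ is incident to $a_i$; since stars are never self-crossing it suffices to pick $a_{i+1}\in\mathcal R_i$, assign the $d_{i+1}$ leaves of $s_{i+1}$ to further points of $\mathcal R_i$, and pass to a new live set $\mathcal R_{i+1}\subseteq\mathcal R_i$ all of whose edges to $a_{i+1}$ have colour $c$ and which lies on the far side of the chord $a_ia_{i+1}$. The purely geometric point — that the live set can always be separated this way — is exactly the recursive ``pocketing'' argument used in the proof of Theorem~\ref{thm:complete}.

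The crux is the size accounting: we must lose only $O(\Delta n)$ points per step, not a constant factor, since a naive choice of $a_{i+1}$ as a majority-colour vertex of $\mathcal R_i$ costs a factor $2$ per step and hence $2^{\gamma}$ overall, which is fatal. Instead I maintain the stronger invariant that colour $c$ is \emph{dense} on $\mathcal R_i$: every point of $\mathcal R_i$ sends colour $c$ to all but a $\tfrac1{2\gamma}$-fraction of $\mathcal R_i$. When this holds, some $a_{i+1}\in\mathcal R_i$ sends colour $c$ to all but $O(|\mathcal R_i|/\gamma+\Delta)$ of $\mathcal R_i$, so $|\mathcal R_{i+1}|\ge|\mathcal R_i|-O(|\mathcal R_i|/\gamma+\Delta)$, and telescoping over $\gamma$ steps keeps $|\mathcal R_\gamma|$ at least a constant fraction of $N$ minus $O(\Delta\gamma)$, which stays positive once $c$ is a large enough constant. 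If the density invariant fails at some step, then the live set contains a point sending colour $\bar c$ to a $\tfrac1{2\gamma}$-fraction of it, and one argues that a live set of $N/\mathrm{poly}$ such points is $\bar c$-rich enough to restart the entire sweep in colour $\bar c$ — and that such a restart can happen at most once. The extra factor $n$ in the statement (beyond $\Delta\gamma$) is precisely the slack that makes both the telescoped inequality and the single possible colour restart go through, and is not claimed to be optimal, in line with the Remark after Theorem~\ref{thm:tree}. Finally one checks that the $\gamma$ stars drawn at $a_1,\dots,a_\gamma$ are mutually non-crossing because each lies inside the pocket cut off by the previous spine chord, completing the embedding of $T_n$. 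The main obstacle throughout is the density bookkeeping of the previous sentences — namely ruling out the factor-$2$-per-step loss by the colour dichotomy above — rather than the geometry, which is standard.
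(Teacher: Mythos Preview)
The corollary in the paper is meant to be read as an immediate consequence of Theorem~\ref{thm:caterpillar}: since $R_g(T_n)=O(\Delta\gamma n)$ and trivially $\Delta\le n$ and $\gamma\le n$, boundedness of either parameter gives $O(n^2)$. No new argument is needed.

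Your proposal instead attempts to re-derive the bound $R_g(T_n)=O(\Delta\gamma n)$ by a greedy sweep along the spine, and this sketch has genuine gaps. First, the density invariant is never established: you assert that you ``maintain'' that every point of $\mathcal R_i$ sends colour $c$ to all but a $1/(2\gamma)$-fraction of $\mathcal R_i$, but you give no argument that such a colour $c$ and such a set exist at the start, and in a generic $2$-colouring there is no reason one colour should be $(1-1/(2\gamma))$-dense on any large subset. Second, the ``restart'' mechanism does not work as described: when density fails you obtain \emph{one} vertex with a $1/(2\gamma)$-fraction of $\bar c$-edges, but that is far from a $\bar c$-dense live set; the sentence ``a live set of $N/\mathrm{poly}$ such points is $\bar c$-rich enough to restart'' is exactly the statement you would need to prove, and nothing prevents the same failure from recurring in colour $\bar c$, so ``at most one restart'' is unjustified. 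Third, the geometric pocketing step already costs a constant factor independent of colour: when you restrict $\mathcal R_{i+1}$ to the far side of the chord $a_ia_{i+1}$, you may discard up to half of $\mathcal R_i$ regardless of how you chose $a_{i+1}$ for its colour degrees, so the telescoping inequality $|\mathcal R_{i+1}|\ge |\mathcal R_i|-O(|\mathcal R_i|/\gamma+\Delta)$ is not supported. The paper sidesteps all of this by separating the geometry from the Ramsey step: it first partitions $P$ by vertical lines into $2\gamma n$ blocks of size $2\Delta$, extracts a monochromatic star center from each block, and then applies Theorem~\ref{thm:path-clique} once to these centers to get either a monotone red spine of length $\gamma$ or a blue $K_n$; no recursive density bookkeeping is needed.
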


In the next theorem, we give a bound on $R_g(T_n)$, for a general tree $T_n$, in terms of the diameter of $T_n$ (the length of the longest path in $T_n$). We did not try to optimize the exponent in the upper bound. Note that whether there is a uniform polynomial bound of $R_g(T_n)$ for all $T_n\in\T(n)$ remains an open question.

\begin{thm}\lab{thm:cliqueTree}
For any tree $T_n$ on $n$ vertices,
$R_g(T_n)\le n^{d+1}$, where $d=d(T_n)$ denotes the diameter of $T_n$.
\end{thm}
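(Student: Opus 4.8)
The plan is to prove $R_g(T_n) \le n^{d+1}$ by induction on the diameter $d = d(T_n)$, rooting the tree at (one endpoint of) a longest path and peeling off the outermost layer of leaves. The base case is $d \le 2$: a tree of diameter at most $2$ is a star, and stars are never self-crossing, so $R_g(T_n) = R(T_n) = R(K_{1,n-1})$, which is at most $n$ (in fact $2n-\Theta(1)$ by classical results, but the crude bound $n \le n^{d+1}$ suffices). For the inductive step, I would pick a longest path $v_0 v_1 \cdots v_d$ in $T_n$, root $T_n$ at $v_0$, and let $L$ be the set of vertices at depth $d$ (the deepest leaves). Then $T' = T_n - L$ is a tree with fewer than $n$ vertices and diameter $d - 1$. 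Write $n' = |V(T')|$.

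\medskip

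The core of the argument is: given a set $P$ of $N := n^{d+1}$ points in general position and a $2$-colouring of $K_P$, first find a monochromatic non-crossing copy of $T'$, then extend it to a copy of $T_n$ by attaching the missing deepest leaves. By induction, since $N \ge (n')^{d} \cdot n \ge R_g(T') \cdot n$ (using $n' < n$ and $d(T') = d-1$, so $R_g(T') \le (n')^{d} \le n^{d}$), we can find a non-crossing monochromatic $T'$ — say it is red — using at most $n^d$ points, leaving a large reservoir of unused points. Now each deepest leaf of $T_n$ has a parent that is a leaf of $T'$; more precisely, the vertices of $T'$ that need extra pendant leaves are themselves leaves of $T'$ (parents of the depth-$d$ vertices have no other children in $T'$, being on the longest path structure — here one must be a little careful: a depth-$(d-1)$ vertex of $T'$ could have several depth-$d$ children in $T_n$, all of which are leaves of $T_n$). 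For each such vertex $u$, embedded at some point $p_u \in P$, I want to attach its pendant leaves using fresh points so that the new edges are red and non-crossing with everything so far.

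\medskip

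The extension step is where the geometric difficulty concentrates, and it is the main obstacle. Attaching a pendant leaf at $p_u$ means choosing an unused point $q$ with the edge $p_u q$ red and the segment $p_u q$ crossing neither the already-drawn edges of the red $T'$ nor the other newly-added pendant edges. To control crossings with the existing embedding, I would use the fact that a non-crossing geometric graph on few vertices only "blocks" a bounded region: more usefully, restrict attention to points $q$ lying in a small angular wedge at $p_u$ that is "clear" of the drawn edges and contains many unused points — since $T'$ is drawn on at most $n^d$ points and $N$ is much larger, a pigeonhole/averaging argument over wedges yields such a clear wedge with $\gg n$ unused points inside it. Within that wedge, sort the unused points by angle around $p_u$; consecutive chords $p_u q$ in this angular order do not cross each other, so it suffices to find, among these $\gg n$ candidates, a set of $\deg$-many whose edges to $p_u$ are all red, where $\deg$ is the number of pendant leaves needed at $u$. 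If fewer than $\deg$ are red, then $p_u$ sends $\ge \text{(all but } \deg\text{)}$ blue edges into the wedge; taking the wedge large enough (still with $\gg n$ points) forces a large blue "fan" at $p_u$, and since a star is non-crossing this would already give a blue (hence monochromatic) copy of $T_n$ provided the fan has $\ge n-1$ edges — so by choosing the constant in $N = n^{d+1}$ generously (the reservoir has size $\Theta(n^{d+1})$, far exceeding $n$), one of the two outcomes must succeed. Iterating over the finitely many vertices $u$ needing leaves, and each time removing the newly used points from the reservoir (which only shrinks it by $O(n)$ per vertex, hence by $O(n^2) = o(n^{d+1})$ total for $d \ge 2$), completes the embedding of the red $T_n$. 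Bookkeeping the constants so that $n^{d+1}$ (rather than $C \cdot n^{d+1}$) is achievable will require using the slack between $R_g(T') \le n^d$ and the available $n^{d+1}$ points; I expect this to go through since $n^{d+1}/n^d = n$ already dominates all the $O(n)$-sized demands with room to spare.
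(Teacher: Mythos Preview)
Your proposal has a fatal error in the extension step. You write that if $p_u$ sends at least $n-1$ blue edges into its wedge, ``this would already give a blue (hence monochromatic) copy of $T_n$.'' But a blue star $K_{1,n-1}$ is a copy of $T_n$ only when $T_n$ is itself a star; in your inductive step $d\ge 3$, so $T_n$ is \emph{not} a star, and this fallback yields nothing. Without it there is no dichotomy forcing the red extension to succeed, and the whole attachment argument collapses.

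The wedge argument has a separate, independent gap. You assert that some ``clear'' angular wedge at $p_u$ contains many unused points, but nothing rules out edges of the already-drawn $T'$ (or pendant edges attached at earlier parents) cutting across every direction from $p_u$ in which points are plentiful. A single edge of $T'$ passing close to $p_u$ can block a sector that contains almost all of $P$; pigeonholing over wedges does not help. Your inductive hypothesis gives you \emph{one} monochromatic $T'$ with no control whatsoever over its geometric placement, so you cannot guarantee visibility from its leaves into a large reservoir.

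The paper's proof avoids both problems by strengthening the induction in two ways. First, the statement proved is ``either an \emph{extreme} red non-crossing copy of $(T_n,v)$ (root at the point of maximum $y$-coordinate) or a blue $K_n$''; the blue-$K_n$ alternative then yields a blue non-crossing $T_n$ via Theorem~\ref{thm:complete}, which is the correct replacement for your broken star fallback. Second, rather than finding one $T'$ and trying to extend it, the paper partitions $P$ into $n^2$ vertical slabs, applies the hypothesis inside each slab to obtain $n^2$ extreme red copies of a height-$(h{-}1)$ tree, and then uses Lemma~\ref{lem:Chvatal} on the $n^2$ roots to produce a red star whose centre is the highest root. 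The slab separation together with the ``extreme'' placement of every root is exactly the geometric control your wedge argument lacks; it is what makes the assembled supertree non-crossing. Rooting at a centre so that the height is $h=\lceil d/2\rceil$ (rather than at an endpoint of a longest path, giving height $d$) is what brings the bound down to $n^{2h}\le n^{d+1}$.
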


\subsection{Note to the main results}

Trees have nice and simple structures. One would expect that they have small geometric Ramsey numbers (as for Ramsey numbers). But it seems that how easily we can embed a tree without edge-crossing depends heavily on certain parameters of the tree, e.g.\ the pathwidth or the diameter. To the best knowledge of the author, the graphs whose geometric Ramsey numbers have been proved to be polynomial all have pathwidth at most 2 (e.g. paths~\cite{KPTV}, cycles~\cite{KPTV,KR}, cycles triangulated from a single vertex~\cite{KPTV,KR}, matchings~\cite{KR}, outerplanar triangulations with pathwidth 2~\cite{CGKVV}). That may explain why it is challenging to obtain a good (polynomial) bound of the geometric Ramsey numbers uniformly for all trees.

\section{Proofs}
\subsection{Proof of Theorem~\ref{thm:dumbbell}(i,iii)}

To prove part (i), we will use the following unpublished result by Perles, stated in~\cite[Theorem D]{G}, whose proof can be found in~\cite{BKV}.

\begin{thm}[Perles]\lab{thm:Perles}
If a convex geometric graph on $N\ge n$ vertices has more than $\lfloor(n-2)N/2\rfloor$ edges, then it contains a non-crossing copy of any caterpillar with $n$ vertices.
\end{thm}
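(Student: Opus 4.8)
The plan is to prove the statement as a convex-geometric, non-crossing strengthening of the classical Erd\H{o}s--Gallai theorem, which asserts that any graph on $N$ vertices with more than $(n-2)N/2$ edges contains a path on $n$ vertices. A path on $n$ vertices is itself a caterpillar, and the extremal configurations in the two settings coincide: partition the $N$ points on the circle into consecutive arcs of $n-1$ points each and turn each arc into a clique. This convex geometric graph has exactly $(n-2)N/2$ edges when $(n-1)\mid N$, and since no two vertices in different arcs are adjacent it has no connected subgraph on $n$ vertices at all, hence no non-crossing caterpillar on $n$ vertices. So the bound is tight, and one should expect a proof paralleling Erd\H{o}s--Gallai while respecting the cyclic order. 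Throughout I fix an arbitrary caterpillar $C$ on $n$ vertices and find a non-crossing copy.

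I would first treat the case when $C$ is a path on $n$ vertices, arguing the contrapositive by induction on $N$: if a convex geometric graph $G$ on $N$ points has no non-crossing path on $n$ vertices, then $e(G)\le \lfloor (n-2)N/2\rfloor$. After reducing to connected $G$ (apply the bound to each component and sum, noting that any subgraph of a convex geometric graph is again convex geometric and a non-crossing copy inside a component is one in $G$), the engine is a longest non-crossing path $Q=q_1q_2\cdots q_t$; by assumption $t\le n-1$. The heart of the argument is a rotation lemma adapted to convex position: using the non-crossing structure one limits which chords from the endpoints $q_1,q_t$ back into $Q$ can be present without either creating a crossing, or yielding through a rotation a longer non-crossing path and contradicting maximality. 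This should force $\deg(q_1)+\deg(q_t)$ to be at most about $t\le n-1$. Deleting the two endpoints and applying the inductive hypothesis then closes the induction, after checking the boundary arithmetic and the small cases $N\le n$ directly.

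To upgrade from the spine path to an arbitrary caterpillar $C$ at the same threshold, I would strengthen the inductive statement so that it embeds $C$ rather than merely a path, exploiting that $C$ has exactly $n-1$ edges, the same budget as $P_n$, while each leaf needs only a single chord and thus enjoys considerable freedom. Concretely, once a sufficiently long non-crossing path is located to host the spine, the arcs cut off between consecutive spine vertices are pairwise disjoint regions in which leaves can be attached without crossings; the degree lower bounds extracted from the edge count must then be spent to guarantee, for each spine vertex, enough unused neighbours lying in the correct arc to receive its leaves. Carrying the leaf-attachment inside the longest-path/rotation induction, rather than as an afterthought, is what keeps the constant at the tight value $(n-2)/2$.

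The main obstacle, and the place where convexity makes the argument genuinely different from Erd\H{o}s--Gallai, is the rotation step: in the abstract setting one freely reverses a segment of a path to relocate an endpoint, but in convex position such a reversal can introduce crossings, so the admissible rotations are constrained by the cyclic order of the endpoints involved. Establishing the correct non-crossing rotation lemma, and verifying that the chords it forbids are exactly enough to bound the endpoint degree sum by $t$, is the crux; the caterpillar generalisation then rides on top of it through careful arc bookkeeping. I would not expect the pure degeneracy bound, namely iteratively deleting a vertex of degree at most $\lfloor (n-2)/2\rfloor$, to suffice, since that only yields a weaker threshold and ignores the global path structure that makes the constant tight.
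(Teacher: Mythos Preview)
The paper does not contain a proof of this theorem. It is quoted as an unpublished result of Perles, with the reader referred to~\cite{BKV} for a proof; in the present paper it functions only as a black box in the proof of Theorem~\ref{thm:dumbbell}(i). So there is no ``paper's own proof'' to compare your attempt against.

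On the proposal itself: what you have written is a strategy, not a proof, and the gaps sit exactly where you place them. In the abstract Erd\H{o}s--Gallai argument the first step is that every neighbour of the endpoint $q_1$ lies on the longest path $Q$; in convex position this is simply false, since an edge from $q_1$ to a vertex $w\notin Q$ may cross an edge of $Q$ and hence fail to extend $Q$ to a longer \emph{non-crossing} path. So before one can even speak of rotations one must control the off-path neighbours of $q_1$ and $q_t$, and you have not indicated how. The rotation step has the same defect: when $q_1\sim q_i$, the rotated path $q_{i-1}q_{i-2}\cdots q_1 q_i q_{i+1}\cdots q_t$ is non-crossing only if $q_1,\ldots,q_{i-1}$ happen to lie, in cyclic order, on the correct side of the chord $q_1q_i$, which a longest non-crossing path need not arrange. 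Finally, your caterpillar upgrade assumes the spine is embedded so that consecutive spine vertices cut off pairwise disjoint arcs; a longest non-crossing path is not in general monotone in the cyclic order, so this too is unjustified. None of these obstacles is obviously fatal, but none is handled either.

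For orientation, the argument in~\cite{BKV} does not proceed via Erd\H{o}s--Gallai rotations. It is a direct extremal argument tailored to convex position, closer in spirit to a one-sided greedy embedding of the caterpillar from each vertex than to longest-path analysis, and it reaches the sharp constant without a rotation lemma.
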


Let $K_P$ be a complete convex geometric graph on $N=(n-1)(m-1)+1$ vertices. If $K_P$ contains a blue copy of $K_m$, then by Theorem~\ref{thm:complete}, $K_P$ contains a blue non-crossing copy of $H_m$. So, we may assume that $K_P$ does not contain a blue copy of $K_m$ (a copy of $K_m$ whose edges are all coloured blue). Then, by Lemma~\ref{lem:Chvatal}, there is a subgraph $G$ of $K_P$ with minimum red degree (the number of incident red edges) at least $n-1$. Hence, $G$ contains at least $v(n-1)/2$ edges, where $v=|V(G)|\ge n$. By Theorem~\ref{thm:Perles}, $G$, and thus $K_P$, contains a red non-crossing copy of $T_n$. This proves that $R_c(T_n,H_m)\le N$. Part (i) then follows by~\eqn{lowerbound}.

Next, we prove part (iii).
Let $T_n$ be a tree on $n$ vertices which contains only two non-leaf vertices. Let $u$ and $v$ denote these two non-leaf vertices. Then $u$ and $v$ must be adjacent. Let $a$ and $b$ denote the degrees of $u$ and $v$ respectively. Then $n=a+b$. By~\eqn{lowerbound}, we only need to prove that $R_g(T_n,H_m)\le (n-1)(m-1)+1$.
Let $P$ be a set of $(n-1)(m-1)+1$ points in the plane in general position and consider any $2$-colouring of $K_P$. By Theorem~\ref{thm:complete}, we may assume that $K_P$ does not contain a blue copy of $K_m$. Then, by Lemma~\ref{lem:Chvatal}, there is a subgraph $S$ of $K_P$, whose minimum red degree is at least $n-1=a+b-1$. Let $u\in S$ be a point on the convex hull of $S$ and let $v$ be the point in $S$ such that $\{u,v\}$ is red and $u$ is incident with exactly $a-1$ red edges on one side (side A) of $h$, where $h$ is the line subtended from the line segment between $u$ and $v$. Thus, $u$ must be incident with at least $b-1$ red edges on the other side (side B). Since $v$ has red degree at least $a+b-1$, either it is incident with at least $a-1$ red edges on side A or at least $b-1$ red edges on side B. Either case would imply the existence of a red non-crossing copy of $T_n$.

\subsection{Proof of Theorems~\ref{thm:caterpillar} and~\ref{thm:dumbbell}(ii)}
\lab{sec:caterpillar}

In this section, we prove an upper bound for the geometric Ramsey numbers of caterpillars. Recently, Cibulka et al.~\cite{CGKVV} bounded the geometric Ramsey number of an outerplanar triangulation on $n$ vertices with pathwidth $2$ by $O(n^{10})$. This immediately gives an upper bound $O(n^{10})$ for the geometric Ramsey numbers of caterpillars. As we would expect, the bound for caterpillars should be significantly better than that. We will prove the bound in Theorem~\ref{thm:caterpillar}.

Let $G$ be a geometric graph on a set $P$ of points. Suppose that all points in $P$ are totally ordered, denoted by $p_1<p_2<\cdots<p_{|P|}$. We say that $p_{i_1}p_{i_2}\cdots p_{i_k}$ is a monotone path in $G$, if we have $p_{i_1}<p_{i_2}<\cdots < p_{i_k}$. Usually we will order the points in $P$ in a way so that a path in $G$ being monotone implies that it is non-crossing. For instance, we may order these points according to their $x$ or $y$-coordinates; or we may order the points in the clockwise or the counterclockwise order ``seen'' by a point not in $P$.

The following theorem (rephrased) was proved in~\cite[Theorem 4.3]{KPT}, by using a lemma of Dilworth~\cite{D}. We will use this theorem in the proof of Theorem~\ref{thm:caterpillar} and in the subsequent sections.

\begin{thm}\lab{thm:path-clique}
Let $P$ be a set of $(n-1)(m-1)+1\le mn$ ordered points. Then, for any $2$-edge-colouring of $K_P$, either there is a red monotone path $p_1< p_2<\cdots< p_{n}$ or there is a blue clique of order $m$.
\end{thm}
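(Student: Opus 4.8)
The plan is to prove Theorem~\ref{thm:path-clique} by a double-counting / pigeonhole argument on maximal red monotone path lengths, which is the standard route via Dilworth's lemma. For each point $p_i \in P$, let $\ell(p_i)$ denote the length (number of vertices) of the longest red monotone path in $K_P$ that ends at $p_i$ (that is, whose maximum vertex is $p_i$). If some $\ell(p_i) \ge n$, we are done, so assume $\ell(p_i) \le n-1$ for all $i$. Since $|P| = (n-1)(m-1)+1$, by pigeonhole there is a value $t \in \{1,\ldots,n-1\}$ and a subset $Q \subseteq P$ with $|Q| \ge m$ such that $\ell(q) = t$ for every $q \in Q$.

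The key observation is that $Q$ is a blue clique. Take any two points $q < q'$ in $Q$. If the edge $\{q, q'\}$ were red, then a longest red monotone path ending at $q$ (which has $\ell(q) = t$ vertices, with $q$ as its maximum vertex) could be extended by appending $q'$, since $q < q'$ means monotonicity is preserved; this yields a red monotone path ending at $q'$ with $t+1$ vertices, contradicting $\ell(q') = t$. Hence every edge inside $Q$ is blue, so $Q$ contains a blue clique of order $m$. This completes the proof, and the inequality $(n-1)(m-1)+1 \le mn$ is just the elementary bound $(n-1)(m-1)+1 = nm - n - m + 2 \le nm$ valid for $n,m \ge 1$ (with equality forced only in degenerate cases), included because the theorem is stated with both bounds on $|P|$.

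The argument is essentially complete as sketched; the only point requiring a small amount of care is verifying that the monotone ordering is exactly what makes the ``extend the path'' step legitimate — appending a larger point to the end of a monotone path keeps it monotone, by definition, with no geometric input needed at this level of abstraction (the geometric content, namely that monotone implies non-crossing, is imposed separately by how $P$ is ordered in the applications). So I do not anticipate a genuine obstacle here; the lemma is a clean instance of the Erd\H{o}s--Szekeres / Dilworth pigeonhole principle applied to the quasi-order induced by red monotone reachability. If one prefers the explicit Dilworth formulation cited from~\cite{D,KPT}: define a partial order on $P$ by $p_i \prec p_j$ iff $p_i < p_j$ and $\{p_i,p_j\}$ is red and they lie on a common red monotone path; a chain of size $n$ gives the red monotone path, and an antichain of size $m$ gives the blue clique (any two incomparable points in this order that are comparable in $<$ must be joined by a blue edge), and Dilworth/Mirsky guarantees one of the two in a set of size $(n-1)(m-1)+1$.
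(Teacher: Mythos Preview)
Your main argument is correct and is precisely the standard proof: label each point by the length of the longest red monotone path ending there, pigeonhole the $(n-1)(m-1)+1$ points into $n-1$ label classes, and observe that any level set of size $m$ is a blue clique. The paper does not actually supply its own proof of this theorem; it quotes the result from K\'{a}rolyi--Pach--T\'{o}th~\cite{KPT} and notes that it follows from Dilworth's lemma~\cite{D}, which is exactly the mechanism underlying your argument (the labelling trick is Mirsky's dual form of Dilworth).

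One small caution about your closing paragraph: the relation ``$p_i\prec p_j$ iff $p_i<p_j$ and $\{p_i,p_j\}$ is red'' is \emph{not} transitive (a red edge $\{p_i,p_j\}$ and a red edge $\{p_j,p_k\}$ need not force $\{p_i,p_k\}$ to be red), so it is not a partial order and Dilworth cannot be applied to it directly; your extra clause ``and they lie on a common red monotone path'' is automatically satisfied whenever the edge is red, so it does not repair this. The correct poset for the Dilworth route is $p_i\preceq p_j$ iff there exists a red monotone path from $p_i$ to $p_j$: this is transitive by concatenation, chains of size $n$ give red monotone paths on at least $n$ vertices, and antichains of size $m$ give blue cliques. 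This wrinkle does not affect your primary pigeonhole proof, which stands on its own.
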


For a point $p$ in the plane, let $x(p)$ and $y(p)$ denote its $x$ and $y$ coordinates. Without loss of generality, for a finite set of points in the plane in general position, we may assume that their $x$ coordinates are all distinct and so are their $y$ coordinates. For two sets of points $A$ and $B$, we say $A\prec B$  if for any choice of $p\in A$ and $p'\in B$, $x(p)< x(p')$.

\begin{proof}[Proof of Theorem~\ref{thm:caterpillar}]
Let $P$ be a set of $4\Delta\gamma n$ points in general position in the plane. Consider any $2$-colouring of $K_P$. Let $\S_1,\ldots,\S_{2\gamma n}$ be a partition of $P$ such that they are separated by $2\gamma n-1$ vertical lines and each part contains exactly $2\Delta$ points. Inside each part $\S_i$, we colour a point red if it is incident with at least $\Delta$ red edges. Otherwise, it is incident with at least $\Delta$ blue edges and we colour it blue. We colour $\S_i$ red if there is a red point in $\S_i$. Otherwise, we colour $\S_i$ blue. Then, there is $c\in\{\mbox{blue,red}\}$ such that at least $\gamma n$ $\S_i$'s are coloured with the colour $c$. Without loss of generality, we may assume that $c$ is red.

Now let $\S_{i_1},\ldots, \S_{i_{\gamma n}}$ be $\gamma n$ red parts such that $\S_{i_1}\prec \S_{i_2}\prec \cdots\prec \S_{i_{\gamma n}}$. For every $1\le j\le \gamma n$, pick a point $u_j\in \S_{i_j}$ such that $u_j$ is incident with at least $\Delta$ points in $\S_{i_j}$ with a red edge. Then, by Theorem~\ref{thm:path-clique}, in the subgraph of $K_P$ induced by $\{u_j:\ 1\le j\le \gamma n \}$, either there is a red path $u_{j_1},\ldots,u_{j_{\gamma}}$ such that $x(u_{j_1})<\cdots<x(u_{j_{\gamma}})$, or a blue clique $K_n$. The first case implies the existence of a red non-crossing copy of $T_n$ (the fact of non-crossing follows by $x(u_{j_1})<\cdots<x(u_{j_{\gamma}})$ and that the red edges from $u_{j_i}$ to its neighbours inside $\S_{j_i}$ does not cross edges inside other $\S_{j'}$), whereas the second case implies the existence of a blue non-crossing copy of $T_n$ by Theorem~\ref{thm:complete}.
\end{proof}

\begin{proof}[Proof of Theorem~\ref{thm:dumbbell}(ii)]

The proof is very analogous to that of Theorem~\ref{thm:caterpillar}. Let $P$ be a set of $\Delta \gamma m^2$ points in the plane in general position and we may assume that there is no blue copy of $K_m$ in $K_P$ by Theorem~\ref{thm:complete}. Let $\S_1,\ldots, \S_{\gamma m}$ be a partition of $P$ such that these parts are separated by $\gamma m-1$ vertical lines and each part contains exactly $\Delta m$ points. By Theorem~\ref{thm:Chvatal}, there must be a red star of order $\Delta$ inside each $\S_i$. Let $u_i$ denote the central vertex of each of these red star in $\S_i$. By Theorem~\ref{thm:path-clique}, there must be a red monotone path $u_{i_1},\ldots,u_{i_{\gamma}}$ such that $x(u_{i_1})<\cdots<x(u_{i_{\gamma}})$. This red path together with all red edges $\{u_{i_j},v\}$ where $v\in \S_{i_j}$ forms a red non-crossing subgraph of $K_P$ which contains $T_n$ as a subgraph.
\end{proof}

\subsection{Proofs of Theorems~\ref{thm:tree} and~\ref{thm:tree-caterpillar}}
\lab{sec:caterpillar-tree}

To prove these two Theorems, we need to apply some results of mutually avoiding sets, defined as follows.

\begin{definition}
Two sets of points $A$ and $B$ in the plane are mutually avoiding if $|A|,|B|\ge 2$ and no line subtended by a pair of points in $A$ intersects the convex hull of $B$ and vice versa.
\end{definition}
Note that if $A\cup B$ are in convex position and $A$ and $B$ are separated by a straight line, then $A$ and $B$ are mutually avoiding.

\begin{thm}[Aronov et al.~\cite{AEGKKPS}]\lab{thm:avoiding}
If $A$ and $B$ are two sets of $n$ points, separated by a line, then there is a constant $c>0$ and two sets $A'\subset A$ and $B'\subset B$ such that $|A'|=|B'|=c\sqrt{n}$ and $A'$ and $B'$ are mutually avoiding.
\end{thm}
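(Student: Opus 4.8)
The statement to establish is the mutually‑avoiding‑sets lemma, the combinatorial engine behind crossing families. I would not aim for a short self‑contained proof; instead the plan is to reduce the geometric assertion to a purely order‑theoretic one and then run an Erd\H{o}s--Szekeres/Dilworth extraction of the kind already used above (Theorem~\ref{thm:path-clique}). Normalise first: after an affine change of coordinates we may assume the separating line $\ell$ is the $y$‑axis, $A$ lies entirely to its left and $B$ entirely to its right, and, by general position, all $2n$ points have distinct $x$‑ and distinct $y$‑coordinates.

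Next, encode how each point ``sees'' the opposite side. For $a\in A$ let $<_a$ be the linear order on $B$ obtained by sorting $B$ by the slope of the segment $ab$ (equivalently, the angular order of $B$ around $a$, which is a genuine total order because $B$ lies in an open half‑plane bounded by a line through $a$); symmetrically define $<_b$ on $A$ for each $b\in B$. The point that makes this the right encoding is the elementary observation that a (non‑vertical) line through two points $a,a'\in A'$ misses $\mathrm{conv}(B')$ \emph{iff} $B'$ lies entirely above that line or entirely below it, and whether $b\in B'$ lies above or below it is just the orientation of the triple $(a,a',b)$, i.e.\ the position of $a,a'$ in the order $<_b$. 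So it suffices to extract $A'\subseteq A$ and $B'\subseteq B$ on which the relevant orders are coherent, and then the geometric conclusion is automatic for both directions by symmetry.

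Concretely I would do this in two rounds. Round one: label each ordered pair $(a,a')\in A^2$ with $a$ below $a'$ by the ``pattern'' recording which $b\in B$ have $a<_b a'$; a Dilworth/monotone‑subsequence argument applied to this array (analogous to the use of Dilworth's lemma behind Theorem~\ref{thm:path-clique}) yields $A'$ of size $\Omega(\sqrt n)$ together with a $B'$ of size $\Omega(\sqrt n)$ on which this pattern stabilises, which by the observation above forces every line through two points of $A'$ to avoid $\mathrm{conv}(B')$. Round two: repeat the symmetric extraction inside $B'$ against $\mathrm{conv}(A')$, shrinking both sets by at most another constant factor inside the square root. A final check, reading the avoidance condition off the stabilised orders exactly as above, completes the argument.

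The main obstacle is squeezing out the $\Omega(\sqrt n)$ bound. The cheap route --- take an Erd\H{o}s--Szekeres convex‑position subset of each of $A$ and $B$ with one part consecutive on the common hull (which does give mutual avoidance) --- only produces sets of size $\Omega(\log n)$, since $\Omega(\sqrt n)$ points in convex position need not exist. Reaching $\sqrt n$ therefore requires genuinely exploiting that ``mutually avoiding'' is strictly weaker than ``jointly in convex position'', and running the extraction on the two sides \emph{simultaneously}: ``mutually avoiding'' is a joint condition, so thinning $A$ to help the lines of $A$ miss $\mathrm{conv}(B)$ does nothing for the reverse requirement. Arranging the double extraction so that the two rounds together cost only a constant inside the square root, and verifying they do not undo each other, is where the real work lies --- this is precisely the content of the Aronov--Erd\H{o}s--Goddard--Kleitman--Klugerman--Pach--Schulman argument in~\cite{AEGKKPS}, which I would invoke rather than reprove in full.
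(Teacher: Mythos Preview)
The paper does not prove this theorem; it is quoted from Aronov et al.~\cite{AEGKKPS} and used as a black box in the proofs of Theorems~\ref{thm:tree-caterpillar} and~\ref{thm:tree}. Your proposal, after some setup, ultimately does the same: you explicitly say you ``would invoke rather than reprove in full'' the argument of~\cite{AEGKKPS}. So there is nothing to compare against --- both you and the paper cite the result rather than prove it.

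On the sketch itself: the normalisation and the reformulation of ``mutually avoiding'' in terms of coherence of the angular orders $<_b$ are correct and are the right starting point. The gap you yourself flag is real, though. In ``Round one'' you label ordered pairs $(a,a')$ by a \emph{subset} of $B$ and then assert that a Dilworth/Erd\H{o}s--Szekeres argument on ``this array'' extracts $A',B'$ of size $\Omega(\sqrt n)$; but no linear or partial order has been specified on those $2^{|B|}$ patterns, so as written there is nothing for Erd\H{o}s--Szekeres to act on. Producing a single concrete permutation of $[n]$ to which the monotone-subsequence lemma applies --- and checking that a monotone subsequence of it really yields mutually avoiding subsets on \emph{both} sides simultaneously --- is exactly the content of~\cite{AEGKKPS}, and is the step you defer.
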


Let $A$ and $B$ be two disjoint sets of points in the plane. Define $K_{A,B}$ to be the complete geometric bipartite graph with the vertex set as points in $A\cup B$ and the edge set as the line segments between any two points $a\in A$ and $b\in B$. If $A$ and $B$ are mutually avoiding, then every vertex $a\in A$ ``sees'' vertices in $B$ clockwisely in the same order and vice versa.

Let $T_m$ be a caterpillar on $m$ vertices. Then, there is a unique partition $A\cup B$ of $V(T_m)$ such that both $A$ and $B$ induce an independent set. We say $T_m$ is $(m_1,m_2)$-decomposable, where $m_1+m_2=m$, if $|A|=m_1$ and $|B|=m_2$. The following observation follows easily from the above property of mutually avoiding sets.

\begin{obs}\lab{ob:avoiding} Let $T_m$ be a caterpillar on $m$ vertices. Suppose that $T_m$ is $(m_1,m_2)$-decomposable.
Let $A$ and $B$ be two mutually avoiding sets with $|A|=m_1$ and $|B|=m_2$. Then, $K_{A,B}$ contains a non-crossing copy of $T_m$.
\end{obs}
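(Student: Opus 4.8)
The plan is to convert the geometric statement into a purely combinatorial one about a ``convex bipartite graph'' and then exhibit an explicit layout of the caterpillar along its spine. First I would record the structure of $K_{A,B}$ when $A,B$ are mutually avoiding. By the visibility property quoted just above, every point of $A$ sees $B$ in one common clockwise order $b_1,\dots,b_{m_2}$, and every point of $B$ sees $A$ in one common clockwise order $a_1,\dots,a_{m_1}$. Using these common orders together with a line separating $A$ from $B$, a short angular-order check shows that, with the orders oriented consistently, two segments $a_ib_j$ and $a_kb_l$ of $K_{A,B}$ that share no endpoint cross if and only if $(i-k)(j-l)<0$, while segments sharing an endpoint never cross. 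Hence it suffices to produce bijections $f\colon X\to\{1,\dots,m_1\}$ and $g\colon Y\to\{1,\dots,m_2\}$, where $X\sqcup Y$ is the unique bipartition of $T_m$ with $|X|=m_1$ and $|Y|=m_2$, such that no two vertex-disjoint edges $\{x,y\},\{x',y'\}$ of $T_m$ have $(f(x)-f(x'))(g(y)-g(y'))<0$; we then embed $x\mapsto a_{f(x)}$ and $y\mapsto b_{g(y)}$.

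For the layout, write the spine of $T_m$ as $s_1s_2\cdots s_\gamma$ (the degenerate cases $m\le 2$ being trivial), and swap the roles of $A$ and $B$ if necessary so that $s_1\in X$. Note that spine vertices alternate between $X$ and $Y$ and that the leaves at $s_t$ lie in the class opposite to $s_t$. I assign ranks in rounds $r=0,1,2,\dots$: in round $0$ set $f(s_1)=1$; in round $t\ge 1$, give the leaves of $s_t$ the next consecutive block of still-unused ranks in their (common) class, and then, if $t<\gamma$, give $s_{t+1}$ the next unused rank in its class. Since the $X$-ranks get filled exactly during the even rounds and the $Y$-ranks during the odd rounds, each in consecutive blocks and in increasing round order, $f$ and $g$ are well-defined bijections onto $\{1,\dots,m_1\}$ and $\{1,\dots,m_2\}$.

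It remains to check that this layout is crossing-free. Each edge of $T_m$ is created in exactly one round: the edges from $s_t$ to its leaves and the spine edge $s_ts_{t+1}$ are the ``round-$t$ edges'', all incident to $s_t$; so two edges from the same round share $s_t$ and do not cross. Now take a round-$t$ edge $e=\{s_t,w\}$ and a round-$t'$ edge $e'=\{s_{t'},w'\}$ with $t'<t$ and no common endpoint. If $s_t$ and $s_{t'}$ lie in the same class, then $w,w'$ lie in the other class; since within each class ranks strictly increase with the round, $s_{t'}$ precedes $s_t$ and $w'$ precedes $w$ in their respective class orders, so $e'$ receives the smaller rank in both classes and $e$ the larger, whence no crossing. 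If $s_t$ and $s_{t'}$ lie in opposite classes, the same monotonicity shows $w'$ precedes $s_t$ (when $t'=t-1$ this uses that $w'\ne s_t$ forces $w'$ to be a leaf of $s_{t'}$, assigned before $s_t=s_{t'+1}$ in that round) and $s_{t'}$ precedes $w$, so again $e$ gets the larger rank in each class and $e'$ the smaller, hence no crossing. Therefore the embedding $x\mapsto a_{f(x)}$, $y\mapsto b_{g(y)}$ realizes a non-crossing copy of $T_m$ in $K_{A,B}$.

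The routine part is the round-based layout and the case check in the last paragraph; the single genuinely geometric step, and the only place the hypothesis is used, is the first one: that mutual avoidance forces the crossing pattern of $K_{A,B}$ to be exactly that of the convex bipartite graph on ordered classes of sizes $m_1$ and $m_2$. I expect the main care there to be in verifying that the two common clockwise orders can be oriented compatibly so that the clean rule ``$a_ib_j$ crosses $a_kb_l$ iff $(i-k)(j-l)<0$'' holds; once that is in place, everything else is bookkeeping.
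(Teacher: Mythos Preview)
Your proposal is correct, and it fleshes out in full detail what the paper leaves implicit: the paper states this observation without proof, simply remarking that it ``follows easily from the above property of mutually avoiding sets'' (that every point of $A$ sees $B$ in one common order and vice versa). Your reduction to the combinatorial crossing rule $(i-k)(j-l)<0$ together with the spine-based round layout is exactly the natural way to make this precise, and your flag that the only delicate point is orienting the two common orders compatibly is well placed.
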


The pathwidth-2 outerplanar triangulations can be embedded in a similar way into a complete bipartite graph, as done in~\cite{CGKVV}, which we describe as follows. Recall that an outerplanar triangulation is a graph that has a planar embedding such that all its vertices are incident with the outer face and all inner faces have degree exactly 3. Recall also that $PW_2(n)$ denotes the set of pathwidth-2 outerplanar triangulations on $n$ vertices. It was proved in~\cite{CGKVV} that $PW_2(n)$ is equal to the following set of outerplanar triangulations $G$ on $n$ vertices.
\begin{equation}
\mbox{\{$G$: $\exists U\subseteq V(G)$ such that $U$ and $V(G)\setminus U$ each induces a path.  \}} \lab{PW2}
\end{equation}

For any $G\in PW_2(m)$, we say that $G$ is $(m_1,m_2)$-decomposable, where $m_1+m_2=m$, if there is $U\subset V(G)$ with $|U|=m_1$ such that $U$ and $V(G)\setminus U$ each induces a path. By~\eqn{PW2}, for every $G\in PW_2(m)$, there is $(m_1,m_2)$ such that $G$ is $(m_1,m_2)$-decomposable.

\begin{prop}\lab{p:H} Suppose that $H_m\in PW_2(m)$ is $(m_1,m_2)$-decomposable with $m_1+m_2=m$.
Let $A$ and $B$ be two mutually avoiding sets. Suppose $|A|=(m_1-1)(n-1)+1$ and $|B|=(m_2-1)(n-1)+1$ and all edges between $A$ and $B$ are coloured blue. Then, $K_{A\cup B}$ either contains a red clique $K_n$ or contains a blue non-crossing copy of $H_m$.
\end{prop}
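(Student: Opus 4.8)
The plan is to find a blue non-crossing path on $m_1$ vertices inside $A$ and a blue non-crossing path on $m_2$ vertices inside $B$, and then to splice these two paths together across the all-blue bipartite graph $K_{A,B}$, following the way pathwidth-$2$ outerplanar triangulations are embedded into mutually avoiding sets in~\cite{CGKVV}. Since $A$ and $B$ are mutually avoiding, every point of $B$ sees the points of $A$ in one and the same clockwise order; fix that order $a_1,a_2,\dots$ on $A$, fix the symmetric order $b_1,b_2,\dots$ on $B$, and recall that monotone paths with respect to such orders are automatically non-crossing. Since $|A|=(m_1-1)(n-1)+1\le m_1 n$, applying Theorem~\ref{thm:path-clique} to $K_A$ with the two colours interchanged yields that either $K_A$ (hence $K_{A\cup B}$) contains a red $K_n$, in which case we are done, or $K_A$ contains a blue monotone path on $m_1$ vertices; writing its vertex set as $A^{*}=\{\alpha_1,\dots,\alpha_{m_1}\}$ in the order inherited from $A$, we have $\alpha_t\alpha_{t+1}$ blue for every $t$. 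Symmetrically, applying Theorem~\ref{thm:path-clique} to $K_B$ we may assume there is a set $B^{*}=\{\beta_1,\dots,\beta_{m_2}\}$ of points of $B$, listed in their inherited order, with $\beta_s\beta_{s+1}$ blue for every $s$. Being subsets of $A$ and $B$, the sets $A^{*}$ and $B^{*}$ are again mutually avoiding, and $A^{*}$ (resp.\ $B^{*}$) is still seen in the order $\alpha_1,\dots,\alpha_{m_1}$ (resp.\ $\beta_1,\dots,\beta_{m_2}$) from every point of the other set.

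Next I would bring in the structure of $H_m$: by~\eqn{PW2}, its $(m_1,m_2)$-decomposability gives a partition $V(H_m)=U\cup W$ with $|U|=m_1$ and $|W|=m_2$, in which $U$ induces a path $u_1u_2\cdots u_{m_1}$, $W$ induces a path $w_1w_2\cdots w_{m_2}$, and every remaining edge of $H_m$ joins $U$ to $W$. The essential geometric input is the embedding of pathwidth-$2$ outerplanar triangulations into mutually avoiding sets from~\cite{CGKVV} (the $PW_2$-analogue of Observation~\ref{ob:avoiding}): if $C=\{\gamma_1,\dots,\gamma_p\}$ and $D=\{\delta_1,\dots,\delta_q\}$ are mutually avoiding, listed in their mutual orders, and $G\in PW_2(p+q)$ is $(p,q)$-decomposable with path-parts $u_1\cdots u_p$ and $w_1\cdots w_q$, then, after possibly reversing one or both of these paths, the map $u_i\mapsto\gamma_i$, $w_j\mapsto\delta_j$ realizes $G$ as a non-crossing geometric graph on $C\cup D$, all of whose edges lie in $K_{C,D}$ together with the two chains $\gamma_1\gamma_2\cdots\gamma_p$ and $\delta_1\delta_2\cdots\delta_q$. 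The mechanism is that in a mutually avoiding pair the segments $\gamma_i\delta_j$ and $\gamma_{i'}\delta_{j'}$ cross precisely when $(i-i')(j-j')>0$, so a drawing of this kind is non-crossing exactly when the $U$–$W$ edges of $G$, read against the two path-orders, form a monotone ``staircase'', and it is a property of pathwidth-$2$ outerplanar triangulations that the two paths can be oriented so as to make this happen. Applying this with $C=A^{*}$ and $D=B^{*}$, and using that $\alpha_t\alpha_{t+1}$, $\beta_s\beta_{s+1}$, and all edges of $K_{A^{*},B^{*}}\subseteq K_{A,B}$ are blue, produces a blue non-crossing copy of $H_m$ in $K_{A\cup B}$, which is what was required. (The degenerate cases $m_1\le 1$ or $m_2\le 1$ amount to embedding a path or a single vertex and are checked directly.)

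The Ramsey bookkeeping — two applications of Theorem~\ref{thm:path-clique} — is entirely routine, so the main obstacle is the geometric embedding lemma for $PW_2$ triangulations: one must verify that the combinatorial structure~\eqn{PW2} of an outerplanar triangulation is compatible with the rotation system forced by a mutually avoiding pair of point sets, and this is precisely the step where being a \emph{triangulation}, rather than merely an $(m_1,m_2)$-decomposable graph, is used and where the construction of~\cite{CGKVV} does the real work.
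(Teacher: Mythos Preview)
Your proposal is correct and follows essentially the same approach as the paper's proof: both arguments order $A$ and $B$ using the mutually avoiding property, apply Theorem~\ref{thm:path-clique} (with colours swapped) to each of $K_A$ and $K_B$ to obtain either a red $K_n$ or blue monotone paths of lengths $m_1$ and $m_2$, and then invoke the $PW_2$-embedding into a mutually avoiding pair (from~\cite{CGKVV}) to assemble the non-crossing blue copy of $H_m$. Your write-up is more explicit about the ``staircase'' crossing criterion and the degenerate cases, but the underlying argument is the same.
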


\begin{proof} Since $A$ and $B$ are mutually avoiding, we can order points in $A$ as $u_1< u_2<\cdots$ and points in $B$ as $v_1< v_2<\cdots$, so that for each $u\in A$, $u$ sees $v_1,v_2$ in the clockwise order and for each $v\in B$, $v$ sees $u_1,u_2,\ldots$ in the counterclockwise order. Assume $K_{A\cup B}$ contains no red copies of $K_n$. Then, by Theorem~\ref{thm:path-clique}, there is a blue non-crossing monotone path $P_1$ on $m_1$ vertices in $A$ and a blue non-crossing monotone path $P_2$ on $m_2$ vertices in $B$. Since $P_1$ and $P_2$ are monotone, they each are non-crossing. Since $A$ and $B$ are mutually avoiding, the two paths $P_1$ and $P_2$, together with all blue edges between $A$ and $B$ contains a blue non-crossing embedding of $H_m$.
\end{proof}

Let $G$ be a geometric graph and let $T$ be a tree.
 Suppose that we want to prove that $G$ has a non-crossing embedding of $T$. It turns out that sometimes it easier to prove instead that $G$ contains a non-crossing copy of $T$ with some additional properties. Let $v$ be a vertex in $T$. Let $(T,v)$ denote the tree $T$ rooted at $v$.
 We say $G$ contains an {\em extreme} copy of $(T,v)$ with respect to the $x$ (or $y$) coordinate, if it contains a subgraph $G'$ and there is an isomorphism $f$ from $T$ to $G'$ such that $f(v)$ has the largest $x$ (or $y$) coordinate among all points $\{f(u):\ u\in T\}$. In most cases, instead of proving that $G$ contains a non-crossing copy of $T$, we prove the stronger statement that $G$ contains an extreme (with respect to the $x$ or $y$ coordinate) non-crossing copy of $T$.



\begin{proof}[Proof of Theorem~\ref{thm:tree-caterpillar}]
 Let $v\in T_n$ be an arbitrary vertex and denote by $(T_n,v)$ the tree $T_n$ rooted at $v$.  Let $c>0$ be a sufficiently large constant to be determined later. Let $P$ be a set of $c(n-1)m^2+1$ points in general position in the plane. We prove that for any $2$-colouring of $K_P$ that does not contain a blue non-crossing copy of $T^C_m$, it must contain an extreme red non-crossing copy of $(T_n,v)$ (with respect to the $y$ coordinate).

We prove this claim by induction on $n$. Clearly, $R_g(T_n,T^C_m)\le c(n-1)m^2+1$ for $n=1$. Now assume that $n\ge 2$ and assume that the assertion holds for every tree with less than $n$ vertices. For any $2$-colouring of $K_P$ without a blue non-crossing copy of $T^C_m$, we will prove that there is an extreme red non-crossing copy of $(T_n,v)$.  Let $u_1,\ldots,u_{\ell}$ denote the set of children of $T_n$. Let $(\widehat T_1,u_1),\ldots,(\widehat T_{\ell},u_{\ell})$ be the set of rooted trees created by removing $v$ from $T_n$ and rooting $u_1,\ldots,u_{\ell}$. Let $n_1,\ldots,n_{\ell}$ denote their orders.  Order points in $P$ so that $p_1< p_2$ if $y(p_1)<y(p_2)$. Suppose that $T^C_m$ is $(m_1,m_2)$-decomposable, for some $(m_1,m_2)$. Let $\S_0$ be the subset of $P$ containing the largest $a=cm_1^2\ell$ points in $P$. Let $h_1,\ldots,h_{\ell-1}$ be $\ell-1$ vertical lines that partition $P\setminus \S_0$ into $\ell$ parts, each containing at least $c(n_i-1)m^2+cm_2^2$ points respectively for $1\le i\le \ell$. This can be done since at most
\[
cm_1^2\ell +\sum_{i=1}^{\ell}\Big(c(n_i-1)m^2+cm_2^2\Big)\le c\ell m^2+cm^2(n-1-\ell)\le c(n-1)m^2+1
\]
total
points
are needed.
  Let $\S_1,\ldots,\S_{\ell}$ denote the $\ell$ parts. In each $\S_i$, let $\S'_i$ be a set of points defined repeatedly in the following way. Initially let $\S'_i=\emptyset$. Since $\S_i\setminus \S'_i$ contains at least $c(n_i-1)m^2+1$ points as long as $|\S'_i|\le cm_2^2-1$, by induction, there must be an extreme red non-crossing $(\widehat T_i,u_i)$ in $\S_i\setminus\S'_i$, since otherwise there will be a blue non-crossing $T^C_m$. Let $p$ be the root of this red copy of $(\widehat T_i,u_i)$ and add $p$ into $\S'_i$. Repeat this until there are exactly $cm_2^2$ points in $\S'_i$. We call these points in $\S'_i$ the {\em connectors}. Now we know that for each $i$ and for every connector $p\in\S'_i$, there is an extreme red non-crossing $(\widehat T_i,u_i)$ in $\S_i$ such that the root $u_i$ is at $p$. If there exists $p\in \S_0$, such that it is linked via a red edge to at least one connector in each $\S'_i$, then there is an extreme red non-crossing copy of $T_n$ with its root at $p$ and then we are done. Suppose this is not true. Then, for every $p\in \S_0$, there is $i$, such that $p$ is linked to all connectors in $\S'_i$ via blue edges. Since there are $cm_1^2\ell$ points in $\S_0$, by the pigeonhole principle, there must exist $\S'_0\subset \S_0$ and $i$ such that $|\S'_0|=cm_1^2$ and all edges between $\S'_0$ and $\S'_i$ are blue. Now $|\S'_0|=cm_1^2$ and $|\S'_i|=cm_2^2$. By Theorem~\ref{thm:avoiding}, provided that $c>0$ is sufficiently large, there are $\S''_0\subset \S'_0$ and $\S''_i\subset \S'_i$ such that $|\S''_0|=m_1$ and $|\S''_i|=m_2$ and $\S''_0$ and $\S''_i$ are mutually avoiding. Then by Observation~\ref{ob:avoiding}, there must be a blue non-crossing copy of $T^C_m$, contradicting our assumption. This completes our proof.

\end{proof}

The proof of Theorem~\ref{thm:tree} is very similar to that of Theorem~\ref{thm:tree-caterpillar}. We only sketch the proof.

\begin{proof}[Proof of Theorem~\ref{thm:tree}] Assume that $H_m$ is $(m_1,m_2)$-decomposable. The claim is trivially true for $n=1$. Now assume $n\ge 2$ and
we follow the same proof of Theorem~\ref{thm:tree-caterpillar}, except that we define $\S_0$ to be the largest $(m_1-1)(n-1)\ell+1$ points and the size of each $\S_i$ is at least $(n_i-1)^2(m-1)+(n-1)(m_2-1)+1$. This can be done since the total number of points needed is
\begin{eqnarray*}
&&\Big((m_1-1)(n-1)\ell+1\Big)+\sum_{i=1}^{\ell}\Big((n_i-1)^2(m-1)+(n-1)(m_2-1)+1\Big)\\
&&\hspace{0.5cm}\le\ell(n-1)(m-2)+1+\ell+(m-1)(n-1)\sum_{i=1}^{\ell}(n_i-1)\\
&&\hspace{0.5cm}
= (n-1)^2(m-1)+1-\ell(n-2)\le (n-1)^2(m-1)+1,
\end{eqnarray*}
since $n\ge 2$.
Assume $K_P$ does not contain a blue non-crossing copy of $H_m$. Then, similarly to the argument as in Theorem~\ref{thm:tree-caterpillar}, if there is no extreme red non-crossing copy of $(T_n,v)$, there must be $\S'_0\subset \S_0$ and $\S'_i\subset \S_i$ for some $1\le i\le \ell$ such that $|\S'_0|=(m_1-1)(n-1)+1$ and $|\S'_i|=(m_2-1)(n-1)+1$ such that all edges between $\S'_0$ and $\S'_i$ are coloured blue. By Proposition~\ref{p:H}, if there is no a blue non-crossing copy of $H_m$, there must be a red clique $K_n$, which implies the existence of an extreme red non-crossing copy of $(T_n,v)$ by Theorem~\ref{thm:complete}. This completes the proof of part (i). We can easily adapt this proof for part (ii).  We prove by induction on $n$, that $R_g(T_n,H_m)\le cm^2(n-1)^3+1$. We need to apply Theorem~\ref{thm:avoiding} to $\S'_0$ and $\S'_i$ and so we raise the sizes of $\S'_0$ and $\S'_i$ to $cm_1^2n^2$ and $cm_2^2n^2$ respectively, for some sufficiently large $c>0$. Thus, we will need to raise the size of $\S_0$ to $cm_1^2n^2\ell$ and raise the size of each $\S_i$ to $cm^2(n_i-1)^3+cm_2^2n^2$. This can be done as it is easy to verify that
\[
cm_1^2n^2\ell+\sum_{i=1}^{\ell}\Big(cm^2(n_i-1)^3+cm_2^2n^2\Big)\le c\ell n^2m^2+cm^2n^2\sum_{i=1}^{\ell}(n_i-1)\le cm^2(n-1)^3+1.
\]
The rest of the proof is the same as in part (i) except that we get $|\S'_0|=cm_1^2n^2$ and $|\S'_i|=cm_2^2n^2$. By Theorem~\ref{thm:avoiding}, we obtain mutually avoiding $\S''_0\subseteq \S'_0$ and $\S''_i\subseteq \S'_i$, with sizes $m_1n$ and $m_2n$ respectively. Then, similarly to part (i), there must be an extreme red non-crossing copy of $(T_n,v)$.
\end{proof}

\subsection{Proof of Theorem~\ref{thm:cliqueTree}}
\lab{sec:tree}

  For two rooted trees $(T_1,v_1)$ and $(T_2,v_2)$, we say $(T_1,v_1)$ is a sub-rooted-tree of $(T_2,v_2)$ if there is an injective homomorphism $f:V(T_1)\to V(T_2)$ with $f(v_1)=v_2$. Given a rooted tree $(T,v)$, the height of $(T,v)$ is the maximum distance from $v$ to $u$ among all $u\in T$. It is easy to see that there is $v\in T_n$ such that the height of $(T,v)$ is $h=\lceil d/2\rceil\le (d+1)/2$. Thus, it is sufficient to prove that for any rooted tree $(T_n,v)$ on $n$ vertices with height $h$, and for any $2$-colouring of $K_P$, where $|P|=n^{2h}$ and points in $P$ are in general position, $K_P$ contains either an extreme red non-crossing copy of $(T_n,v)$ or a blue $K_n$. Then our claim follows by Theorem~\ref{thm:complete}.

  We prove by induction on $h$. This claim is trivially true for $h=0$. Suppose $h\ge 1$ and assume that the claim holds for $h-1$. Let $P$ be a set of $n^{2h}$ points in the plane in general position. Consider any $2$-colouring of $K_P$ such that there are no blue copies of $K_n$.
Let $v_1,\ldots,v_k$ be the set of children of $v$ in $(T_n,v)$. Then, the removal of $v$ from $T$ results in $k$ new rooted trees with roots $v_1,\ldots,v_k$ respectively, for some $k<n$, and their total number of vertices is at most $n$. Clearly, there is a rooted tree $(T',u)$ with height $h-1$ and at most $n$ vertices, which contains each of these $k$ rooted trees as a sub-rooted-tree (such a tree $(T',u)$ can be obtained by simply merging the roots of each of these $k$ rooted trees into one).  Let $\S_1,\ldots,\S_{n^2}$ be a partition of $P$ such that each part contains exactly $n^{2(h-1)}$ points and these parts are separated by $n^2-1$ vertical lines. By induction and by our assumption that $K_P$ does not contain a blue copy of $K_n$, there is an extreme red non-crossing copy of $(T',u)$ in each of $\S_i$, $1\le i\le n^2$. Now let $P'=\{p_1,\ldots, p_{n^2}\}$ denote the set of $n^2$ points that are the roots of the red copies of $(T',u)$ in each $\S_i$. By Lemma~\ref{lem:Chvatal}, we may assume that there is a red star of size $n$ in $K_{P'}$, such that the non-leaf vertex of the star has the maximum $y$-coordinate among all vertices in the star. This red star together with the $n$ red copies of $T'$ form an extreme red rooted tree which is a supergraph of $T_n$. It is easy to see that this rooted tree is non-crossing by our construction. It implies that there is an extreme red non-crossing copy of $(T_n,v)$. This completes our proof.
\bigskip

\no {\bf Acknowledgement}: I would like to thank Gy. K\'{a}rolyi for making me aware of Perles' theorem, which leads to the proof of Theorem~\ref{thm:dumbbell}(i).

\end{document}